\documentclass[11pt,a4paper]{article}

\usepackage[T1]{fontenc}
\usepackage[utf8]{inputenc}
\usepackage{amsmath,amssymb,amsthm,mathtools}
\usepackage{geometry}
\usepackage{booktabs}
\usepackage{array}
\usepackage{longtable}
\usepackage{microtype}
\usepackage{hyperref}
\usepackage{authblk}
\usepackage{xcolor}
\hypersetup{
  colorlinks=true,
  linkcolor=blue!55!black,
  citecolor=blue!55!black,
  urlcolor=blue!55!black
}

\newcommand{\C}{\mathbb{C}}
\newcommand{\R}{\mathbb{R}}
\newcommand{\Z}{\mathbb{Z}}
\newcommand{\QLS}{\mathrm{QLS}}
\newcommand{\card}{\operatorname{card}}
\newcommand{\od}{\mathbin{\odot}}
\newcommand{\ii}{\mathrm{i}}
\newcommand{\e}{\mathrm{e}}
\newcommand{\ip}[2]{\left\langle #1,#2\right\rangle}

\newtheorem{definition}{Definition}

\newtheorem{proposition}{Proposition}
\newtheorem{theorem}{Theorem}
\newtheorem{corollary}{Corollary}
\newtheorem{remark}{Remark}

\title{New Cardinalities for Quantum Latin Squares of Order Six}

\author[1]{Zhipeng Xu\thanks{Corresponding author: \texttt{xuzhp@ntu.edu.cn}}}
\affil[1]{School of Mathematics and Statistics, Nantong University, Nantong, China}
\date{}

\begin{document}
\maketitle

\begin{abstract}
We give explicit quantum Latin squares of order $6$ with cardinalities
$19$, $21$, $23$, $25$, $27$, $32$, and $35$, where vectors differing only
by a global phase are identified.  Cardinalities $19$ and $21$ arise from
symmetric Schur products of columns of dephased Butson matrices.  A
parameterized direct-sum construction in $\C^6=\C^4\oplus\C^2$ yields
cardinalities $23$ and $25$ by a controlled splitting of two pairs of rays.
Cardinality $27$ is obtained from mixed Schur products of a $BH(6,6)$ matrix
and a row-permuted copy.  Two further mixed constructions give cardinalities
$32$ and $35$: the first has exactly two three-element phase classes, while
the second has exactly one two-element phase class.  In every Butson case,
Hadamard orthogonality and the ray count are certified by finite arithmetic
with exponent matrices.  Combined with previously known attainable values
and the general impossibility of cardinality $7$, these constructions realize
every order-six cardinality except $7$ and the single currently unresolved
value $29$.
\end{abstract}

\noindent\textbf{Keywords:}
quantum Latin square; cardinality; complex Hadamard matrix; Schur product;
direct-sum construction; mixed Butson construction; order six

\section{Introduction}

Latin squares are classical objects of combinatorics and design theory whose
history includes Euler's study of Graeco--Latin squares
\cite{Euler1782,DenesKeedwell1974,ColbournDinitz2007}.  Quantum Latin squares
were introduced by Musto and Vicary as a quantum analogue in which the symbols
of a Latin square are replaced by unit vectors and every row and column is an
orthonormal basis \cite{MustoVicary2016}.  They are connected with unitary
error bases, teleportation and dense coding, and more general quantum
combinatorial designs \cite{Werner2001,GoyenecheEtAl2018,MustoVicary2019}.

Order six is especially prominent because of Euler's classical 36-officers
problem and its quantum counterparts \cite{RatherEtAl2022,ZyczkowskiEtAl2023}.
A separate line of work studies the \emph{cardinality} of a quantum Latin
square, namely the number of distinct vectors after global phases have been
identified \cite{PaczosEtAl2021,ZhangWangJi2025,ZangEtAl2025,ZhangCao2026,
ZhangJi2026}.  For a $\QLS(n)$, the cardinality lies between $n$ and $n^2$,
and cardinality $n+1$ is impossible \cite{ZangEtAl2025,ZhangJi2026}.

A summary of known cardinalities is given by Zhang, Lv, and Cao
\cite[Table~6]{ZhangLvCao2026}.  In that account, the order-six cases
$23$, $25$, $27$, $29$, $32$, and $35$ were left open.  The present paper
constructs five of these six values, leaving only $29$ unresolved.  Together
with the previously known cases, this realizes every cardinality
$c\in[6,36]\setminus\{7,29\}$; the value $7$ is impossible for every
$\QLS(6)$.

The seven constructions display three complementary mechanisms.  First,
symmetric Schur products of the columns of one complex Hadamard matrix
produce at most $\binom72=21$ phase classes.  A designed triple collision in a
$BH(6,8)$ matrix gives cardinality $19$, whereas Tao's isolated $BH(6,3)$
matrix reaches the collision-free maximum $21$.  Second, a linked family of
orthonormal bases in $\C^4\oplus\C^2$ yields cardinality $23$ at a symmetric
parameter point and cardinality $25$ after a two-pair ray splitting.  Third,
mixed Schur products of two Butson matrices remove the symmetry bound.  They
produce cardinality $27$ from a row-permuted $BH(6,6)$ pair, cardinality $32$
from a $BH(6,3)$--$BH(6,6)$ pair, and cardinality $35$ from a $BH(6,12)$
matrix and a row-permuted transpose.

The cardinality arguments use modular exponent signatures for the Butson
constructions and exact inner-product identities for the direct-sum family.
Thus none of the existence proofs depends on numerical tolerance.

The paper is organized as follows.  Section~\ref{sec:prelim} recalls the
necessary definitions.  Section~\ref{sec:hadprod} gives the mixed and symmetric
Hadamard-product constructions.  Sections~\ref{sec:c19} and \ref{sec:c21}
give the cardinality-$19$ and cardinality-$21$ examples.  Section~\ref{sec:direct}
gives a common direct-sum family for cardinalities $23$ and $25$.
Sections~\ref{sec:c27}, \ref{sec:c32}, and \ref{sec:c35} give mixed Butson
constructions of cardinalities $27$, $32$, and $35$, respectively.
Section~\ref{sec:spectrum} records the resulting spectrum consequences.

\section{Definitions and background}\label{sec:prelim}

\begin{definition}[Quantum Latin square]
A quantum Latin square of order $n$, denoted by $\QLS(n)$, is an $n\times n$
array
\[
  \Phi=(\phi_{ij})_{0\leq i,j\leq n-1}
\]
of unit vectors in $\C^n$ such that every row and every column forms an
orthonormal basis of $\C^n$.
\end{definition}

Two unit vectors represent the same quantum state when they differ only by a
global phase:
\[
  u\sim v
  \quad\Longleftrightarrow\quad
  u=\e^{\ii\alpha}v
  \quad\text{for some }\alpha\in\R.
\]
The cardinality of $\Phi$ is
\[
  \card(\Phi)
  =\#\bigl\{[\phi_{ij}]:0\leq i,j\leq n-1\bigr\}.
\]

\begin{definition}[Complex Hadamard and Butson matrices]
An $n\times n$ matrix $H$ is a complex Hadamard matrix if every entry of $H$
has modulus one and
\[
  H^*H=HH^*=nI_n.
\]
It is \emph{dephased} if its first row and first column consist entirely of
ones.  If every entry is a $q$th root of unity, then $H$ is a Butson matrix,
denoted by $BH(n,q)$.
\end{definition}

Complex Hadamard matrices form a rich class of unitary structures; see
Tadej and {\.{Z}}yczkowski for a survey \cite{TadejZyczkowski2006}.  Their
classification is already nontrivial in order six.  Two order-six matrices
used below are the isolated Tao matrix in $BH(6,3)$ and a Di\c{t}\u{a}--Fourier
type matrix in $BH(6,6)$.

For vectors $x=(x_r)$ and $y=(y_r)$ in $\C^n$, their Schur product is
\[
  x\od y=(x_0y_0,\ldots,x_{n-1}y_{n-1})^{\mathsf T}.
\]

\section{Hadamard-product constructions}\label{sec:hadprod}

\begin{proposition}[Mixed Hadamard products]\label{prop:mixed}
Let $H=(h_0,\ldots,h_{n-1})$ and $K=(k_0,\ldots,k_{n-1})$ be complex
Hadamard matrices of order $n$, where the displayed vectors are columns.
Then the array
\begin{equation}\label{eq:mixedvectors}
  \Phi(H,K)_{ij}=\frac1{\sqrt n}(h_i\od k_j),
  \qquad 0\leq i,j\leq n-1,
\end{equation}
is a $\QLS(n)$.
\end{proposition}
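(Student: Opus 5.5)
The plan is to check the two defining conditions of a $\QLS(n)$ directly: every entry is a unit vector, and every row and every column of the array is an orthonormal basis. Both follow from one inner-product identity, so the whole argument reduces to computing $\ip{\phi_{ij}}{\phi_{i'j'}}$.

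First, since every entry of $H$ and $K$ has modulus one, each coordinate of $h_i\od k_j$ has modulus one. Hence $\|h_i\od k_j\|^2=n$, and $\phi_{ij}=\frac{1}{\sqrt n}(h_i\od k_j)$ is a unit vector.

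Next, I will use the inner product $\ip{x}{y}=\sum_r \overline{x_r}\,y_r$ and compute the general entry
\[
  \ip{\phi_{ij}}{\phi_{i'j'}}=\frac1n\sum_{r=0}^{n-1}\overline{(h_i)_r}\,\overline{(k_j)_r}\,(h_{i'})_r\,(k_{j'})_r .
\]
The key observation is that when one index is shared, the corresponding factor drops out, because $|z|^2=1$ for every entry $z$ of $H$ and $K$.

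For a row (fixed $i$, with $j\neq j'$), the factors $\overline{(h_i)_r}(h_i)_r$ equal $1$. The sum therefore becomes $\frac1n\sum_r \overline{(k_j)_r}(k_{j'})_r=\frac1n\ip{k_j}{k_{j'}}$. This vanishes because $K^*K=nI_n$, which says exactly that distinct columns of $K$ are orthogonal. The symmetric argument for a column (fixed $j$, with $i\neq i'$) uses $H^*H=nI_n$.

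Finally, a row or column gives $n$ orthonormal vectors in $\C^n$, hence an orthonormal basis. I do not expect a real obstacle here. The only point needing care is to invoke the column-orthogonality condition $H^*H=nI_n$ rather than $HH^*$, which matches the convention that $h_i$ and $k_j$ are columns. It is also worth noting that $H$ and $K$ are completely independent, so nothing beyond each being Hadamard is needed.
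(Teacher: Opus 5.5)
Your proposal is correct and follows essentially the same argument as the paper. Both establish unit norm from the unimodular entries, cancel the shared factor in $\langle \phi_{ij},\phi_{ij'}\rangle$ to reduce to the column orthogonality $K^*K=nI_n$, and treat columns symmetrically using $H^*H=nI_n$.
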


\begin{proof}
Every vector in \eqref{eq:mixedvectors} has norm one.  For fixed $i$,
\[
\begin{aligned}
 \ip{\Phi(H,K)_{ij}}{\Phi(H,K)_{ij'}}
 &=\frac1n\sum_{r=0}^{n-1}
   \overline{(h_i)_r(k_j)_r}(h_i)_r(k_{j'})_r\\
 &=\frac1n\sum_{r=0}^{n-1}\overline{(k_j)_r}(k_{j'})_r
 =\delta_{jj'}.
\end{aligned}
\]
Thus each row is an orthonormal basis.  The same calculation with the roles of
$H$ and $K$ interchanged shows that each column is an orthonormal basis.
\end{proof}

Taking $K=H$ gives the symmetric construction
\[
  \Phi(H)=\Phi(H,H),
  \qquad
  v_{ij}=\frac1{\sqrt n}(h_i\od h_j).
\]

\begin{proposition}\label{prop:symmetric}
For every complex Hadamard matrix $H$ of order $n$, $\Phi(H)$ is a symmetric
$\QLS(n)$.  If $n=6$, then
\[
  \card(\Phi(H))\leq\binom{7}{2}=21.
\]
If $H$ is dephased, two vectors $v_{ij}$ and $v_{k\ell}$ are phase equivalent
if and only if they are equal.
\end{proposition}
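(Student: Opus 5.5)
The plan is to obtain the three assertions in turn, the first directly from Proposition~\ref{prop:mixed}.

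\emph{QLS and symmetry.} Taking $K=H$ in Proposition~\ref{prop:mixed} shows that $\Phi(H)$ is a $\QLS(n)$. The Schur product is commutative, so $h_i\od h_j=h_j\od h_i$. Hence $v_{ij}=v_{ji}$, and the array is symmetric.

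\emph{Cardinality bound.} By symmetry, every entry equals some $v_{ij}$ with $i\le j$, so there are at most $\binom{n+1}{2}$ distinct vectors and at most that many phase classes. For $n=6$ this gives $\binom72=21$. I read ``symmetric'' as the only structure being used here; no finer collision argument is needed for the upper bound.

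\emph{Phase equivalence for dephased $H$.} If $H$ is dephased, the first row of $H$ is all ones, so $(h_i)_0=1$ for every $i$. Therefore the $0$th coordinate of $v_{ij}$ is $1/\sqrt n$ for all $i,j$. Suppose $v_{ij}=\e^{\ii\alpha}v_{k\ell}$. Comparing $0$th coordinates gives $1/\sqrt n=\e^{\ii\alpha}/\sqrt n$, so $\e^{\ii\alpha}=1$ and the two vectors are equal. The converse is trivial.

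Consequently, for dephased $H$ the cardinality is simply the number of distinct vectors $h_i\od h_j$ with $i\le j$. This reduction lets later sections count rays exactly through exponent matrices.

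The only point needing care is the convention that ``dephased'' makes the first \emph{row} all ones. That is exactly what fixes the common coordinate $0$ of every column $h_i$, and with it the phase normalization. If instead only the first column were all ones, one would have to multiply by the first row's phases, which are common to all vectors. So the equivalence would still hold after renormalizing, and I do not expect a real obstacle.
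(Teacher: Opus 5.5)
Your proof is correct and follows the paper's argument essentially verbatim. Proposition~\ref{prop:mixed} with $K=H$ together with commutativity of $\od$ gives a symmetric QLS, counting unordered pairs $\{i,j\}$ gives $\binom{7}{2}=21$, and the common first coordinate $1/\sqrt n$ forces the phase to equal $1$.

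Your closing aside is inaccurate, though harmless. If only the first column of $H$ were all ones, the first coordinate of $v_{ij}$ would be $(h_i)_0(h_j)_0/\sqrt n$, and this is not common to all vectors. For instance, negating column $1$ of $H_{19}$ yields $v_{01}=-v_{25}$, so phase equivalence would not reduce to equality. This does not affect your proof, because the paper's definition of dephased also makes the first row all ones.
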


\begin{proof}
The quantum Latin property follows from Proposition~\ref{prop:mixed} and
$v_{ij}=v_{ji}$.  Hence there is at most one phase class for each unordered
pair $\{i,j\}$, giving the stated upper bound.  If $H$ is dephased, the first
coordinate of every $v_{ij}$ equals $1/\sqrt n$.  Therefore
$v_{ij}=\lambda v_{k\ell}$ with $|\lambda|=1$ forces $\lambda=1$.
\end{proof}

For compact display of the explicit arrays, let
\[
 \xi_q=\e^{2\pi\ii/q},
 \qquad
 \mathbf r_q(a_1a_2a_3a_4a_5)
 =
 \frac1{\sqrt6}
 \bigl(1,\xi_q^{a_1},\xi_q^{a_2},\xi_q^{a_3},
       \xi_q^{a_4},\xi_q^{a_5}\bigr)^{\mathsf T}.
\]
Thus a five-digit string denotes a fully explicit dephased unit vector.
For $q=12$, the symbols $\mathrm A$ and $\mathrm B$ denote the residues
$10$ and $11$.  If $S=(s_{ij})$ is a $6\times6$ table of such strings, then
$\mathbf r_q[S]$ denotes the array obtained by applying $\mathbf r_q$
entrywise.  This notation fixes the first coordinate to $1/\sqrt6$ and
therefore also fixes the global phase.

\section{A \texorpdfstring{$\QLS(6)$}{QLS(6)} of cardinality 19}\label{sec:c19}

Let
\[
  \zeta=\e^{\pi\ii/4},
\]
and consider the exponent matrix over $\Z_8$
\begin{equation}\label{eq:A19}
A_{19}=
\begin{pmatrix}
0&0&0&0&0&0\\
0&4&2&6&6&2\\
0&2&4&3&7&6\\
0&6&1&4&2&5\\
0&6&5&2&4&1\\
0&2&6&7&3&4
\end{pmatrix}.
\end{equation}
Define
\begin{equation}\label{eq:H19}
  (H_{19})_{rs}=\zeta^{(A_{19})_{rs}},
  \qquad 0\leq r,s\leq5.
\end{equation}
The matrix is dephased and all entries are eighth roots of unity.

\begin{proposition}\label{prop:H19had}
The matrix $H_{19}$ is a complex Hadamard matrix of order $6$.
\end{proposition}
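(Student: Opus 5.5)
The plan is to reduce Hadamard orthogonality to a finite combinatorial check on the exponent matrix $A_{19}$ over $\Z_8$. Every entry of $H_{19}$ is a power of $\zeta$, so it has modulus one, and only the matrix identity remains. For a square matrix, $HH^*=6I_6$ means $H/\sqrt6$ is unitary, and then $H^*H=6I_6$ as well. So it suffices to check that the six rows of $H_{19}$ are pairwise orthogonal. The diagonal entries of $HH^*$ are automatically $6$.

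For rows $r\neq r'$ the inner product is
\[
  \sum_{s=0}^{5}\zeta^{(A_{19})_{rs}-(A_{19})_{r's}},
\]
a sum of six eighth roots of unity. Since $8$ is a power of $2$, the minimal vanishing sums of $8$th roots of unity are exactly the pairs $\zeta^{d}+\zeta^{d+4}=0$. I would not need the full classification, though, because only sufficiency is used: if the multiset of differences $D_{rr'}=\{(A_{19})_{rs}-(A_{19})_{r's}\bmod 8\}$ can be partitioned into antipodal pairs $\{d,d+4\}$, the sum vanishes. Concretely, the criterion is that for each $d\in\Z_8$ the number of occurrences of $d$ in $D_{rr'}$ equals the number of occurrences of $d+4$.

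I would then verify this condition for all $\binom62=15$ pairs of rows. Two representative cases:
\begin{itemize}
\item For rows $0,1$, the difference multiset is $\{0,4,2,6,6,2\}$, which pairs as $0$--$4$, $2$--$6$, $2$--$6$.
\item For rows $1,2$, it is $\{0,2,6,3,7,4\}$, which pairs as $0$--$4$, $2$--$6$, $3$--$7$.
\end{itemize}
The pairs involving row $0$ reduce to checking that each row $r\ge1$ is itself antipodally balanced. Rows $1$ through $5$ visibly are; for example, row $2$ gives $\{0,2,4,3,7,6\}$. The remaining ten pairs among rows $1$–$5$ are handled the same way. The matrix is not symmetric (row $2$ differs from column $2$), so no symmetry shortcut halves the work. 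However, the cyclic-looking pattern of rows $2$–$5$ may reduce the checks to a few orbit representatives.

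The main obstacle is purely bookkeeping. The fifteen difference multisets must each be computed mod $8$ without error and shown to be $4$-shift balanced. I would present them in a compact table, one line per pair, listing the multiset and its antipodal pairing. This certifies orthogonality by exact finite arithmetic, consistent with the paper's stated approach of avoiding numerical tolerance.
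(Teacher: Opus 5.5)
Your proof is correct and is essentially the paper's argument. The paper likewise reduces orthogonality to the residue multisets of exponent differences modulo $8$ and checks that each root sum vanishes; each of its five patterns $P_1,\ldots,P_5$ is a union of antipodal pairs $\{t,t+4\}$. The only difference is that the paper works with column pairs and obtains $H_{19}^*H_{19}=6I_6$ directly, while you use row pairs together with the square-matrix implication $HH^*=6I_6\Rightarrow H^*H=6I_6$. Completing your table confirms that all fifteen row-difference multisets are antipodally balanced (they fall into the same five patterns $P_1,\ldots,P_5$), so the bookkeeping goes through.
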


\begin{proof}
For distinct columns $j$ and $k$, their inner product is
\[
  \sum_{r=0}^5 \zeta^{(A_{19})_{rk}-(A_{19})_{rj}}.
\]
The residue multisets of the exponent differences fall into the five patterns
\[
\begin{array}{c|c|c}
\text{pattern}&\text{residues modulo }8&\text{column pairs}\\
\hline
P_1&\{0,2,2,4,6,6\}&01\\
P_2&\{0,1,2,4,5,6\}&02,05,13,14\\
P_3&\{0,2,3,4,6,7\}&03,04,12,15\\
P_4&\{0,1,3,4,5,7\}&23,24,35,45\\
P_5&\{0,0,2,4,4,6\}&25,34.
\end{array}
\]
Each corresponding root sum vanishes:
\[
\begin{aligned}
P_1&: 1+2\zeta^2+\zeta^4+2\zeta^6=0,\\
P_2&: 1+\zeta+\zeta^2+\zeta^4+\zeta^5+\zeta^6=0,\\
P_3&: 1+\zeta^2+\zeta^3+\zeta^4+\zeta^6+\zeta^7=0,\\
P_4&: 1+\zeta+\zeta^3+\zeta^4+\zeta^5+\zeta^7=0,\\
P_5&: 2+\zeta^2+2\zeta^4+\zeta^6=0.
\end{aligned}
\]
Thus $H_{19}^*H_{19}=6I_6$.
\end{proof}

Let $h_0,\ldots,h_5$ be the columns of $H_{19}$ and set
$\Phi_{19}=\Phi(H_{19})$.  In the notation introduced above, the complete
array is the following explicit matrix:
\begin{equation}\label{eq:explicit19}
\Phi_{19}=\mathbf r_8\!\left[
\begin{pmatrix}
00000&42662&24156&63427&67243&26514\\
42662&04444&66730&25201&21025&60376\\
24156&66730&40224&07575&03311&42662\\
63427&25201&07575&46046&42662&01133\\
67243&21025&03311&42662&46406&05757\\
26514&60376&42662&01133&05757&44220
\end{pmatrix}
\right].
\end{equation}
For example, the repeated entry $42662$ represents
\[
 \mathbf r_8(42662)
 =\frac1{\sqrt6}(1,-1,\ii,-\ii,-\ii,\ii)^{\mathsf T}.
\]

\begin{theorem}\label{thm:c19}
The array $\Phi_{19}$ is a $\QLS(6)$ with cardinality $19$.
\end{theorem}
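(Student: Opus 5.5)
The QLS property is immediate. By Proposition~\ref{prop:H19had}, $H_{19}$ is a complex Hadamard matrix, so Proposition~\ref{prop:symmetric} shows that $\Phi(H_{19})$ is a symmetric $\QLS(6)$. First I will check that the displayed array \eqref{eq:explicit19} really is $\Phi(H_{19})$. The exponent string of $v_{ij}$ is the residue vector $\bigl((A_{19})_{rj}+(A_{19})_{ri}\bigr)_{r=1}^{5}$ modulo $8$, so each upper-triangular entry of the table is the sum of columns $i$ and $j$ of $A_{19}$. The lower triangle then agrees by symmetry.

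For the cardinality, $H_{19}$ is dephased, so by Proposition~\ref{prop:symmetric} phase equivalence among the $v_{ij}$ is the same as equality of the five-digit exponent strings over $\Z_8$. The count therefore reduces to counting the distinct strings among the $21$ entries with $i\leq j$. Reading them off, the string $42662$ occurs exactly at positions $\{0,1\}$, $\{2,5\}$ and $\{3,4\}$, so these three unordered pairs give a single class.

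The main step is to show there is no other collision. I will list the remaining $18$ strings: the six diagonal entries $04444$, $40224$, $46046$, $46406$, $44220$, $00000$, and the off-diagonal entries $24156$, $63427$, $67243$, $26514$, $66730$, $25201$, $21025$, $60376$, $07575$, $03311$, $01133$, $05757$. I will check that they are pairwise distinct and different from $42662$. Sorting them lexicographically makes this a finite inspection.

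With that, the classes number $18+1=19$, and $\card(\Phi_{19})=19$. The only real obstacle is careful arithmetic: computing the column sums modulo $8$ correctly and confirming that the triple collision is the only coincidence. To reduce the risk of error, I would double-check the diagonal entries, where $v_{ii}$ has exponent $2(A_{19})_{ri}$, which is always even. This also shows at once that no diagonal entry can equal an off-diagonal entry containing an odd digit.
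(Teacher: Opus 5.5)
Your proposal is correct and takes essentially the same approach as the paper: Proposition~\ref{prop:symmetric} reduces ray equality to equality of the mod-$8$ column-sum signatures, and inspecting the $21$ unordered signatures shows that the only coincidence is the triple $\{0,1\},\{2,5\},\{3,4\}$, which gives $21-2=19$. Your extra check that the displayed array matches the column sums of $A_{19}$, and your parity remark about the diagonal entries, are correct but not needed for the argument.
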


\begin{proof}
Proposition~\ref{prop:symmetric} gives $\Phi_{19}\in\QLS(6)$ and reduces
phase equivalence to equality.  For an unordered pair $0\leq i\leq j\leq5$,
the exponent signature is
\begin{equation}\label{eq:signature19}
  \sigma_{ij}=A_{19,\bullet i}+A_{19,\bullet j}\pmod8.
\end{equation}
The complete explicit array is displayed in \eqref{eq:explicit19}, and the
unordered signature list is recorded in Appendix~\ref{app:c19cert}.  Its only
repetition is
\[
  \sigma_{01}=\sigma_{25}=\sigma_{34},
\]
so $v_{01}=v_{25}=v_{34}$, while all other unordered products are distinct.
Therefore
\[
  \card(\Phi_{19})=21-(3-1)=19.
\]
\end{proof}

For direct inspection, the row-major phase-class matrix is
\begin{equation}\label{eq:class19}
C_{19}=
\begin{pmatrix}
0&1&2&3&4&5\\
1&6&7&8&9&10\\
2&7&11&12&13&1\\
3&8&12&14&1&15\\
4&9&13&1&16&17\\
5&10&1&15&17&18
\end{pmatrix}.
\end{equation}
Its entries are exactly $0,1,\ldots,18$.  The label $1$ occurs in the six
ordered cells corresponding to the three unordered coincidences
$\{0,1\}$, $\{2,5\}$, and $\{3,4\}$.

\section{A \texorpdfstring{$\QLS(6)$}{QLS(6)} of cardinality 21}\label{sec:c21}

Let $\omega=\e^{2\pi\ii/3}$ and consider Tao's isolated $BH(6,3)$ matrix
\cite{TadejZyczkowski2006,Szollosi2012}.  In exponent form it is
\begin{equation}\label{eq:A21}
A_{21}=
\begin{pmatrix}
0&0&0&0&0&0\\
0&0&1&1&2&2\\
0&1&0&2&2&1\\
0&1&2&0&1&2\\
0&2&2&1&0&1\\
0&2&1&2&1&0
\end{pmatrix}
\quad\text{over }\Z_3,
\end{equation}
and
\begin{equation}\label{eq:H21}
  (H_{21})_{rs}=\omega^{(A_{21})_{rs}}.
\end{equation}

\begin{proposition}\label{prop:H21had}
The matrix $H_{21}$ is a dephased complex Hadamard matrix.
\end{proposition}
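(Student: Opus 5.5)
Dephasing is immediate from \eqref{eq:A21}: the first row and first column of $A_{21}$ are zero, so the first row and first column of $H_{21}$ consist of ones, and every entry is a cube root of unity. What remains is $H_{21}^*H_{21}=6I_6$; since $H_{21}$ is square, this also gives $H_{21}H_{21}^*=6I_6$. I would follow the proof of Proposition~\ref{prop:H19had}. For distinct columns $j<k$ the inner product is
\[
  \sum_{r=0}^5 \omega^{(A_{21})_{rk}-(A_{21})_{rj}},
\]
so it suffices to show that, for each of the $\binom62=15$ column pairs, this root sum vanishes.

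Over $\Z_3$ the criterion is simple. If the difference residues $0,1,2$ occur $a,b,c$ times with $a+b+c=6$, the sum is $a+b\omega+c\omega^2$. Because $1,\omega,\omega^2$ satisfy only the rational relation $1+\omega+\omega^2=0$, this vanishes if and only if $a=b=c=2$. So the whole task reduces to checking that every difference column $A_{\bullet k}-A_{\bullet j}\pmod 3$ has residue multiset $\{0,0,1,1,2,2\}$.

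The check is organized as follows. For pairs with $j=0$, the difference is column $k$ itself, and each nonzero column of $A_{21}$ visibly contains two $0$s, two $1$s and two $2$s. For pairs $1\le j<k\le5$, I would use the structure of $A_{21}$: its lower-right $5\times5$ block is symmetric, with zero diagonal and off-diagonal entries in $\{1,2\}$, in the pattern of a Paley-type conference matrix on five points. From this, each difference has a $0$ in row $0$ and, on rows $1,\dots,5$, entries $\pm$ the off-diagonal values together with the products of the two rows. I would simply tabulate all ten differences, as in the table of Proposition~\ref{prop:H19had}, and confirm that each multiset is $\{0,0,1,1,2,2\}$. For instance, columns $1,2$ give differences $(0,1,2,1,0,2)$.

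The only real obstacle is bookkeeping: getting all fifteen residue multisets right without an arithmetic slip. There is no conceptual difficulty, because the vanishing criterion above turns each case into a simple count.
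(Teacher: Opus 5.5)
Your proposal is correct and follows the paper's proof. Both arguments observe that the exponent differences of any two distinct columns have residue multiset $\{0,0,1,1,2,2\}$, so each off-diagonal inner product is $2(1+\omega+\omega^2)=0$; your extra remarks are correct but go beyond what the paper states: the converse criterion, the fact that $H^*H=6I$ implies $HH^*=6I$ for square $H$, and the explicit dephasing check.
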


\begin{proof}
Every entry has modulus one.  For every pair of distinct columns, the six
coordinatewise exponent differences contain each residue $0$, $1$, and $2$
exactly twice.  Hence every off-diagonal column inner product equals
\[
  2(1+\omega+\omega^2)=0.
\]
Thus $H_{21}^*H_{21}=6I_6$.
\end{proof}

Let $\Phi_{21}=\Phi(H_{21})$.  Its complete explicit form is
\begin{equation}\label{eq:explicit21}
\Phi_{21}=\mathbf r_3\!\left[
\begin{pmatrix}
00000&01122&10221&12012&22101&21210\\
01122&02211&11010&10101&20220&22002\\
10221&11010&20112&22200&02022&01101\\
12012&10101&22200&21021&01110&00222\\
22101&20220&02022&01110&11202&10011\\
21210&22002&01101&00222&10011&12120
\end{pmatrix}
\right].
\end{equation}

\begin{theorem}\label{thm:c21}
The array $\Phi_{21}$ is a $\QLS(6)$ with cardinality $21$.
\end{theorem}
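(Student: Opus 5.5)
The proof will follow the template of Theorem~\ref{thm:c19}. First, Proposition~\ref{prop:H21had} says $H_{21}$ is a dephased complex Hadamard matrix. Proposition~\ref{prop:symmetric} then gives that $\Phi_{21}=\Phi(H_{21})$ is a symmetric $\QLS(6)$ with $\card(\Phi_{21})\le 21$. Because $H_{21}$ is dephased, the same proposition reduces phase equivalence of $v_{ij}$ and $v_{k\ell}$ to equality. All entries are cube roots of unity, so equality is in turn equivalent to equality of the exponent signatures
\[
  \sigma_{ij}=A_{21,\bullet i}+A_{21,\bullet j}\pmod 3
\]
for unordered pairs $0\le i\le j\le 5$. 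It therefore suffices to show that the $21$ signatures $\sigma_{ij}$ are pairwise distinct. These are exactly the five-digit strings in \eqref{eq:explicit21}, read from the upper triangle including the diagonal.

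I will not compare all $\binom{21}{2}$ pairs blindly. Instead I plan to use the structure of $A_{21}$ to organize the check into a few cases.

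\emph{Diagonal versus diagonal.} The diagonal signatures are $2A_{21,\bullet i}$. Doubling is a bijection on $\Z_3$, and the columns of $A_{21}$ are distinct, so these six are distinct.

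\emph{Off-diagonal versus off-diagonal, overlapping pairs.} If $\{i,j\}\ne\{k,\ell\}$ share an index, say $\sigma_{ij}=\sigma_{i\ell}$, then cancelling column $i$ gives $A_{\bullet j}=A_{\bullet \ell}$, a contradiction. Likewise, $\sigma_{ij}=\sigma_{ii}$ would force $A_{\bullet j}=A_{\bullet i}$.

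\emph{Disjoint pairs.} The remaining cases are $\sigma_{ij}=\sigma_{k\ell}$ with $\{i,j\}\cap\{k,\ell\}=\emptyset$, and $\sigma_{ij}=\sigma_{kk}$ with $k\notin\{i,j\}$. These are equivalent to the linear relations
\[
  A_{\bullet i}+A_{\bullet j}-A_{\bullet k}-A_{\bullet \ell}\equiv 0
  \qquad\text{or}\qquad
  A_{\bullet i}+A_{\bullet j}+A_{\bullet k}\equiv 0 \pmod 3,
\]
the second because $-2\equiv 1$.

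The main obstacle is excluding these disjoint-pair relations. I will handle them by a finite check of the $45$ disjoint unordered pairs of pairs and the $60$ choices of $(\{i,j\},k)$, using one convenient coordinate first to prune.

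Row $1$ of $A_{21}$, namely $(0,0,1,1,2,2)$, splits the columns into the blocks $\{0,1\},\{2,3\},\{4,5\}$. Rows $2$--$5$ split them similarly into other block systems. A relation can survive only if it holds simultaneously in every row. I expect the row conditions, each reduced to a condition on block labels modulo $3$, to eliminate all candidates, probably after a short case table indexed by the block types. The table of such signatures can be recorded in an appendix, paralleling Appendix~\ref{app:c19cert}.

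With all $21$ signatures distinct, $\card(\Phi_{21})=21$, attaining the bound of Proposition~\ref{prop:symmetric}.
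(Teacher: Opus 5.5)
Your proposal is correct and is essentially the paper's own argument: Proposition~\ref{prop:symmetric} together with dephasing reduces everything to pairwise distinctness of the twenty-one mod-$3$ signatures $A_{21,\bullet i}+A_{21,\bullet j}$, which the paper simply lists in Appendix~\ref{app:c21cert} and asserts are distinct, and your cancellation cases are just a tidier organization of that same finite check. The disjoint-pair relations you left as an expectation do all fail, as that list confirms: the five-digit strings split $7$--$7$--$7$ by their first digit and are visibly distinct within each group.
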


\begin{proof}
By Proposition~\ref{prop:symmetric}, $\Phi_{21}$ is a symmetric $\QLS(6)$
and has at most $21$ phase classes.  Since $H_{21}$ is dephased, phase
equivalence is equality.  For $0\leq i\leq j\leq5$, define the exponent
signature
\[
  \tau_{ij}=A_{21,\bullet i}+A_{21,\bullet j}\pmod3.
\]
Equation~\eqref{eq:explicit21} displays all thirty-six cells explicitly,
while Appendix~\ref{app:c21cert} lists the twenty-one unordered signatures.
They are pairwise
distinct in $\Z_3^6$, so all unordered Schur products are distinct.  Hence
\[
  \card(\Phi_{21})=21.
\]
\end{proof}

A row-major phase-class certificate is
\begin{equation}\label{eq:class21}
C_{21}=
\begin{pmatrix}
0&1&2&3&4&5\\
1&6&7&8&9&10\\
2&7&11&12&13&14\\
3&8&12&15&16&17\\
4&9&13&16&18&19\\
5&10&14&17&19&20
\end{pmatrix}.
\end{equation}
The labels in \eqref{eq:class21} are exactly $0,1,\ldots,20$.

\section{A direct-sum family of cardinalities 23 and 25}\label{sec:direct}

Write
\[
 \C^6=\mathcal U\oplus\mathcal V,
 \qquad
 \mathcal U=\operatorname{span}\{e_0,e_1,e_2,e_3\},
 \qquad
 \mathcal V=\operatorname{span}\{e_4,e_5\},
\]
and let $\rho=1/\sqrt2$.  Choose real numbers $a,b>0$ satisfying
\[
 a^2+b^2=1.
\]
Define the following unit vectors in $\mathcal U\cong\R^4$:
\begin{align*}
 A&=(1,0,0,0),&
 B&=(0,1,0,0),&
 C&=(0,0,1,0),&
 D&=(0,0,0,1),\\
 E&=(0,a,b\rho,b\rho),&
 F&=(a,0,b\rho,-b\rho),&
 G&=(b\rho,-b\rho,0,a),&
 H&=(-b\rho,-b\rho,a,0),\\
 I&=(0,-b,a\rho,a\rho),&
 J&=(-b,0,a\rho,-a\rho),&
 K&=(-a\rho,a\rho,0,b),&
 L&=(a\rho,a\rho,b,0),\\
 M&=(-a\rho,a\rho,b,0),&
 N&=(b\rho,-b\rho,a,0),&
 O&=(\rho,\rho,0,0),&
 P&=(a\rho,a\rho,0,b),\\
 Q&=(-b\rho,-b\rho,0,a),&
 R&=(-\rho,\rho,0,0),&
 S&=(0,0,\rho,\rho),&
 T&=(b,0,a\rho,-a\rho),\\
 U&=(-a,0,b\rho,-b\rho).&&&&
\end{align*}
In $\mathcal V$, define
\[
 x=e_4,
 \qquad y=e_5,
 \qquad z=\rho(e_4+e_5),
 \qquad w=\rho(e_4-e_5).
\]

\begin{proposition}\label{prop:directbases}
For every $a,b>0$ with $a^2+b^2=1$, each of the following sets is an
orthonormal basis of $\mathcal U$:
\[
\begin{array}{llll}
\{A,B,C,D\}, & \{E,F,G,H\}, & \{I,J,K,L\}, & \{M,D,N,O\},\\
\{P,C,Q,R\}, & \{T,B,U,S\}, & \{I,M,P,T\}, & \{A,D,C,B\},\\
\{E,N,Q,U\}, & \{B,F,J,S\}, & \{C,G,K,O\}, & \{D,H,L,R\}.
\end{array}
\]
\end{proposition}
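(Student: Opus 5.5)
Since a set of four unit vectors in the four-dimensional space $\mathcal U$ is an orthonormal basis exactly when the vectors are pairwise orthogonal and of norm one, the plan is to verify these two conditions for each of the twelve listed quadruples. All vectors are real, so every check is a real dot product in $\R^4$. The only relations needed are $a^2+b^2=1$ and $\rho^2=1/2$.

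The first step is to check the norms of all twenty-one vectors. $A,B,C,D$ are standard basis vectors. $O,R,S$ have two entries equal to $\pm\rho$, so their squared norm is $2\rho^2=1$. Every remaining vector has, up to signs and positions, the entries $(0,a,b\rho,b\rho)$ or $(0,b,a\rho,a\rho)$. Their squared norms are $a^2+b^2/2+b^2/2=1$ and $b^2+a^2=1$ respectively. Checking these shapes against the list covers all of $E,\dots,U$.

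The second step is to check the $12\times 6=72$ pairwise inner products. Two of the bases, $\{A,B,C,D\}$ and $\{A,D,C,B\}$, are standard and need nothing. For the rest, I will group the pairs by their support pattern:
\begin{itemize}
\item If the supports of the two vectors are disjoint, the inner product is trivially zero. Examples are $D$ against $M,N,O$, $C$ against $P,Q,R$, and $B$ against $T,U,S$.
\item Otherwise, the overlapping coordinates give a sum of at most four terms. By construction these cancel in one of two ways: as $\pm(ab-ab)$, or as $\pm(b^2\rho^2-b^2\rho^2)$ and its analogues.
\end{itemize}
For instance, in $\{E,F,G,H\}$ we get
\[
\langle E,F\rangle=0+0+b^2\rho^2-b^2\rho^2=0
\quad\text{and}\quad
\langle E,G\rangle=-ab\rho+ab\rho=0.
\]
The basis $\{I,J,K,L\}$ is the same computation with $a$ and $b$ interchanged and some signs changed. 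The mixed bases $\{I,M,P,T\}$, $\{E,N,Q,U\}$, $\{B,F,J,S\}$, $\{C,G,K,O\}$ and $\{D,H,L,R\}$ are handled the same way. For example,
\[
\langle I,M\rangle=-a\rho b+a\rho b=0
\quad\text{and}\quad
\langle I,T\rangle=0+0+a^2\rho^2-a^2\rho^2=0.
\]

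The main obstacle is bookkeeping rather than ideas: the five mixed bases combine vectors from different families, so the sign patterns there must be checked individually. I would present these computations compactly, either in a table listing each nonzero inner-product sum or as a Gram-matrix check $V^{\mathsf T}V=I_4$ for each basis. In every case the result is an identity in $a,b,\rho$ that holds using only $a^2+b^2=1$ and $2\rho^2=1$, so it holds for every admissible $(a,b)$.
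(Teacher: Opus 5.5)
Your proposal is correct and is essentially the paper's proof: the paper also verifies each of the twelve quadruples by exact dot products, displaying the Gram matrix of $\{E,F,G,H\}$ as $(a^2+b^2)I_4$ and stating that the other eleven bases follow by direct multiplication. One small correction to your bookkeeping: besides the two cancellation types you list, there is a third, since the six pairs $\ip{M}{N}$, $\ip{P}{Q}$, $\ip{T}{U}$, $\ip{F}{J}$, $\ip{G}{K}$, $\ip{H}{L}$ evaluate to $\pm ab(1-2\rho^2)$ and so genuinely use $2\rho^2=1$, as your closing sentence already allows.
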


\begin{proof}
The assertion follows by exact dot products.  For example, the matrix with
columns $E,F,G,H$ is
\[
 \begin{pmatrix}
 0&a&b\rho&-b\rho\\
 a&0&-b\rho&-b\rho\\
 b\rho&b\rho&0&a\\
 b\rho&-b\rho&a&0
 \end{pmatrix},
\]
whose Gram matrix is $(a^2+b^2)I_4=I_4$.  Direct multiplication gives $I_4$
for each of the other eleven displayed sets.
\end{proof}

Define
\begin{equation}\label{eq:Phiab}
 \Phi(a,b)=
 \begin{pmatrix}
 x&A&y&B&C&D\\
 y&x&E&F&G&H\\
 I&y&x&J&K&L\\
 M&D&N&z&O&w\\
 P&C&Q&w&z&R\\
 T&B&U&S&w&z
 \end{pmatrix}.
\end{equation}

\begin{proposition}\label{prop:PhiabQLS}
For every $a,b>0$ with $a^2+b^2=1$, the array $\Phi(a,b)$ is a $\QLS(6)$.
\end{proposition}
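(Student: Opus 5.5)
The plan is to reduce everything to the orthogonal decomposition $\C^6=\mathcal U\oplus\mathcal V$ and then invoke Proposition~\ref{prop:directbases}. The basic observation is this. A collection of six unit vectors, each lying in either $\mathcal U$ or $\mathcal V$, is an orthonormal basis of $\C^6$ exactly when two conditions hold. First, the vectors lying in $\mathcal U$ must form an orthonormal basis of $\mathcal U$ (so there are four of them). Second, the vectors lying in $\mathcal V$ must form an orthonormal basis of $\mathcal V$ (so there are two). The reason is that vectors from $\mathcal U$ are automatically orthogonal to vectors from $\mathcal V$, and a set of six orthonormal vectors spans $\C^6$. Every entry of $\Phi(a,b)$ is one of the letters $A,\ldots,U$, which lie in $\mathcal U$, or one of $x,y,z,w$, which lie in $\mathcal V$. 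All of these are unit vectors, since $a^2+b^2=1$ and $2\rho^2=1$.

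The proof then splits into two checks, each done line by line over the twelve lines of \eqref{eq:Phiab}.

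\textbf{Step 1: the $\mathcal V$-part.} In each row and each column I identify the entries from $\{x,y,z,w\}$.
\begin{itemize}
\item Rows $0,1,2$ each contain the pair $\{x,y\}$, which is the standard basis of $\mathcal V$.
\item Rows $3,4,5$ each contain $\{z,w\}$, which is orthonormal because $\langle z,w\rangle=\rho^2(1-1)=0$.
\item Columns $0,1,2$ each contain $\{x,y\}$.
\item Columns $3,4,5$ each contain $\{z,w\}$.
\end{itemize}
So every line contains exactly two $\mathcal V$-vectors, and they form an orthonormal basis of $\mathcal V$.

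\textbf{Step 2: the $\mathcal U$-part.} I read off the four remaining letters in each line and match them with a set listed in Proposition~\ref{prop:directbases}.
\begin{itemize}
\item Rows $0$ through $5$ give $\{A,B,C,D\}$, $\{E,F,G,H\}$, $\{I,J,K,L\}$, $\{M,D,N,O\}$, $\{P,C,Q,R\}$ and $\{T,B,U,S\}$.
\item Columns $0$ through $5$ give $\{I,M,P,T\}$, $\{A,D,C,B\}$, $\{E,N,Q,U\}$, $\{B,F,J,S\}$, $\{C,G,K,O\}$ and $\{D,H,L,R\}$.
\end{itemize}
These are exactly the twelve sets in that proposition, so each is an orthonormal basis of $\mathcal U$. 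Combining Steps 1 and 2 with the decomposition observation, every row and every column is an orthonormal basis of $\C^6$. Hence $\Phi(a,b)$ is a $\QLS(6)$ for all admissible $a,b$.

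\textbf{Main obstacle.} The only real content sits in Proposition~\ref{prop:directbases}, which may be assumed here. If I had to justify it myself, I would check the Gram matrices directly. Each off-diagonal entry should reduce to $0$, using either a sign cancellation such as $b^2\rho^2-b^2\rho^2$ or terms like $ab-ab$. Each diagonal entry should reduce to $a^2+b^2=1$ or $2\rho^2=1$. In this proof the step most likely to cause error is the bookkeeping: reading each row and column of \eqref{eq:Phiab} correctly and confirming that no line contains an unintended third $\mathcal V$-vector or a missing letter.
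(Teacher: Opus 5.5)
Your proof is correct and follows the paper's argument. Each row and column splits into four $\mathcal U$-vectors, which form one of the twelve bases of Proposition~\ref{prop:directbases} (your row-then-column bookkeeping of \eqref{eq:Phiab} matches the paper's list), and two $\mathcal V$-vectors forming $\{x,y\}$ or $\{z,w\}$; the orthogonality $\mathcal U\perp\mathcal V$ then completes the argument.
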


\begin{proof}
Every row and column contains four vectors in $\mathcal U$ and two vectors in
$\mathcal V$.  The twelve four-dimensional parts, in row order and then
column order, are exactly the bases in Proposition~\ref{prop:directbases}.
The two-dimensional part is either $\{x,y\}$ or $\{z,w\}$.  Since
$\mathcal U\perp\mathcal V$, every row and column is an orthonormal basis of
$\C^6$.
\end{proof}

\subsection{The symmetric point: cardinality 23}

Set
\[
 a=b=\rho=\frac1{\sqrt2},
 \qquad
 \Phi_{23}=\Phi(\rho,\rho).
\]
Substitution into the vectors defining \eqref{eq:Phiab} gives the following
explicit representatives in $\mathcal U$:
\begin{align}
E&=(0,\rho,\tfrac12,\tfrac12),&
F&=(\rho,0,\tfrac12,-\tfrac12),&
G&=(\tfrac12,-\tfrac12,0,\rho),&
H&=(-\tfrac12,-\tfrac12,\rho,0),\notag\\
I&=(0,-\rho,\tfrac12,\tfrac12),&
J&=(-\rho,0,\tfrac12,-\tfrac12),&
K&=(-\tfrac12,\tfrac12,0,\rho),&
L&=(\tfrac12,\tfrac12,\rho,0),\notag\\
M&=(-\tfrac12,\tfrac12,\rho,0),&
N&=(\tfrac12,-\tfrac12,\rho,0),&
O&=(\rho,\rho,0,0),&
P&=(\tfrac12,\tfrac12,0,\rho),\notag\\
Q&=(-\tfrac12,-\tfrac12,0,\rho),&
R&=(-\rho,\rho,0,0),&
S&=(0,0,\rho,\rho),&
T&=(\rho,0,\tfrac12,-\tfrac12),\notag\\
U&=(-\rho,0,\tfrac12,-\tfrac12).
\label{eq:explicit23vectors}
\end{align}
Together with $A,B,C,D$ and $x,y,z,w$ defined above, these vectors inserted
entrywise into \eqref{eq:Phiab} are the complete explicit matrix
$\Phi_{23}$.

\begin{theorem}\label{thm:c23}
The array $\Phi_{23}$ is a $\QLS(6)$ with cardinality $23$.
\end{theorem}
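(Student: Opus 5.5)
Since $\Phi_{23}=\Phi(\rho,\rho)$, Proposition~\ref{prop:PhiabQLS} already shows that it is a $\QLS(6)$. What remains is the count of phase classes, which I will do in three steps: list the symbols occurring in \eqref{eq:Phiab}, find the coincidences at the symmetric point, and prove that no other rays coincide.

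\emph{Step 1: the symbol list.} The array \eqref{eq:Phiab} uses $25$ symbols: the $21$ letters $A,\ldots,U$ in $\mathcal U$ and the four vectors $x,y,z,w$ in $\mathcal V$. Every cell is one of these symbols, so $\card(\Phi_{23})\le 25$.

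\emph{Step 2: the coincidences.} Comparing the explicit list \eqref{eq:explicit23vectors} gives
\[
T=(\rho,0,\tfrac12,-\tfrac12)=F,
\qquad
U=(-\rho,0,\tfrac12,-\tfrac12)=J.
\]
These are exactly the two pairs that separate when $a\neq b$, since $T$ and $F$ differ by swapping $a$ and $b$, and likewise $U$ and $J$. Hence $\card(\Phi_{23})\le 23$.

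\emph{Step 3: no further coincidences.} I must show that the $23$ remaining rays are pairwise distinct.
\begin{itemize}
\item A vector in $\mathcal U$ is never a scalar multiple of a vector in $\mathcal V$, so the two families are disjoint.
\item The rays of $x,y,z,w$ are distinct, because $x\perp y$, $z\perp w$, and $|\ip{x}{z}|=\rho<1$ (and similarly for the other cross pairs).
\item Every vector in $\mathcal U$ is real. A real unit vector equal to $\e^{\ii\alpha}$ times another real unit vector forces $\e^{\ii\alpha}=\pm1$. So it suffices to check $u\neq\pm u'$ for the $19$ real vectors
\[
A,B,C,D,E,F,G,H,I,J,K,L,M,N,O,P,Q,R,S.
\]
\end{itemize}

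For the real vectors I will sort by the multiset of absolute values of the coordinates, together with the position of the zeros. This gives four types:
\begin{itemize}
\item the standard vectors $A,B,C,D$;
\item the vectors with two entries $\rho$ and two zeros, namely $O,R,S$;
\item the vectors of type $(0,\rho,\tfrac12,\tfrac12)$ up to permutation, namely $E,F,I,J$;
\item the vectors of type $(\tfrac12,\tfrac12,\rho,0)$ up to permutation, namely $G,H,K,L,M,N,P,Q$.
\end{itemize}
Inside each type, vectors with different supports or different positions of $\rho$ are clearly distinct. The only cases needing a sign check are vectors with the same absolute-value pattern:
\begin{itemize}
\item $O$ versus $R$;
\item $E$ versus $I$, and $F$ versus $J$;
\item $G,K,P,Q$, all of shape $(\pm\tfrac12,\pm\tfrac12,0,\rho)$;
\item $H,L,M,N$, all of shape $(\pm\tfrac12,\pm\tfrac12,\rho,0)$.
\end{itemize}
In each such group the sign pattern of the first two coordinates, normalized by the sign of the coordinate equal to $\pm\rho$, is different. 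For example, in $G,K,P,Q$ the normalized signs are $(+,-),(-,+),(+,+),(-,-)$. So no two vectors agree up to $\pm1$.

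This case analysis is the main obstacle. It is routine but must be exhaustive. A cleaner uniform certificate is to compute the $19\times19$ real Gram matrix and check that every off-diagonal entry has absolute value strictly less than $1$. With the two coincidences of Step~2, this gives $\card(\Phi_{23})=25-2=23$.
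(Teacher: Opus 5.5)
Your argument is correct and follows the same skeleton as the paper's proof: the quantum Latin property comes from Proposition~\ref{prop:PhiabQLS}, the only coincidences are $F=T$ and $J=U$, and $\mathcal V$ contributes four further rays. Where you differ is in how you rule out any other coincidences.

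The paper computes the moduli $|\ip{X}{Y}|$ for all $\binom{21}{2}=210$ pairs of labels. It observes that they lie in $\{0,\tfrac12,\tfrac1{\sqrt2},1\}$, with the value $1$ occurring only for $\{F,T\}$ and $\{J,U\}$, and concludes by the equality case of Cauchy--Schwarz. This is exactly the Gram-matrix certificate you mention as an alternative at the end.

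Your main route instead uses two cheap ray invariants. The coordinatewise modulus pattern is unchanged by any phase, and the reality of every vector in $\mathcal U$ restricts the possible phases to $\pm1$. Together these reduce the whole check to sign comparisons inside six small groups, which is lighter and visibly exhaustive by hand. The paper's overlap table costs more computation, but it gives quantitative information about how far apart the rays are. It is also the same certificate the paper applies again at $(4/5,3/5)$ in Theorem~\ref{thm:c25}.

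One small inaccuracy: your uniform criterion does not separate $E$ from $I$ or $F$ from $J$. After normalizing the $\pm\rho$ entry to $+\rho$, the signs of the first two coordinates read $(0,+)$ for both $E$ and $I$, and $(+,0)$ for both $F$ and $J$. These pairs are separated by a different observation. $E$ and $I$ have the same third entry $\tfrac12$ but opposite entries $\pm\rho$, so $E\neq\pm I$; likewise $F\neq\pm J$. With this one-line fix, the case analysis is complete.
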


\begin{proof}
The quantum Latin property follows from Proposition~\ref{prop:PhiabQLS}.
At the symmetric point,
\[
 F=T,
 \qquad
 J=U.
\]
A comparison of the remaining labeled vectors gives
\[
 \bigl\{|\ip{X}{Y}|:X\neq Y,\ X,Y\in\{A,\ldots,U\}\bigr\}
 =\left\{0,\frac12,\frac1{\sqrt2},1\right\},
\]
and the value $1$ occurs only for the pairs $\{F,T\}$ and $\{J,U\}$.
Consequently the twenty-one labels $A,\ldots,U$ determine nineteen rays in
$\mathcal U$.  The vectors $x,y,z,w$ determine four distinct rays in
$\mathcal V$, and no nonzero ray can lie in both orthogonal summands.  Thus
\[
 \card(\Phi_{23})=19+4=23.
\]
\end{proof}

A phase-class certificate is
\begin{equation}\label{eq:class23}
C_{23}=
\begin{pmatrix}
0&1&2&3&4&5\\
2&0&6&7&8&9\\
10&2&0&11&12&13\\
14&5&15&16&17&18\\
19&4&20&18&16&21\\
7&3&11&22&18&16
\end{pmatrix}.
\end{equation}
The labels in \eqref{eq:class23} are exactly $0,1,\ldots,22$.

\subsection{Ray splitting: cardinality 25}

Set
\[
 a=\frac45,
 \qquad
 b=\frac35,
 \qquad
 \Phi_{25}=\Phi\left(\frac45,\frac35\right).
\]
For readability, put
\[
 \alpha=\frac{3}{5\sqrt2},
 \qquad
 \beta=\frac{4}{5\sqrt2}.
\]
The specialized vectors in $\mathcal U$ are explicitly
\begin{align}
E&=(0,\tfrac45,\alpha,\alpha),&
F&=(\tfrac45,0,\alpha,-\alpha),&
G&=(\alpha,-\alpha,0,\tfrac45),&
H&=(-\alpha,-\alpha,\tfrac45,0),\notag\\
I&=(0,-\tfrac35,\beta,\beta),&
J&=(-\tfrac35,0,\beta,-\beta),&
K&=(-\beta,\beta,0,\tfrac35),&
L&=(\beta,\beta,\tfrac35,0),\notag\\
M&=(-\beta,\beta,\tfrac35,0),&
N&=(\alpha,-\alpha,\tfrac45,0),&
O&=(\rho,\rho,0,0),&
P&=(\beta,\beta,0,\tfrac35),\notag\\
Q&=(-\alpha,-\alpha,0,\tfrac45),&
R&=(-\rho,\rho,0,0),&
S&=(0,0,\rho,\rho),&
T&=(\tfrac35,0,\beta,-\beta),\notag\\
U&=(-\tfrac45,0,\alpha,-\alpha).
\label{eq:explicit25vectors}
\end{align}
Together with $A,B,C,D$ and $x,y,z,w$, substitution of these vectors into
\eqref{eq:Phiab} gives the complete explicit matrix $\Phi_{25}$.

\begin{theorem}\label{thm:c25}
The array $\Phi_{25}$ is a $\QLS(6)$ with cardinality $25$.
\end{theorem}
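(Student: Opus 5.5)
The plan mirrors the proof of Theorem~\ref{thm:c23}. The quantum Latin property of $\Phi_{25}$ comes directly from Proposition~\ref{prop:PhiabQLS}, since $a=\tfrac45$ and $b=\tfrac35$ are positive and satisfy $a^2+b^2=1$. What remains is to count the rays. All vectors in \eqref{eq:explicit25vectors} and all of $x,y,z,w$ are real unit vectors. So two of them are phase equivalent exactly when they agree up to sign, that is, when the absolute value of their inner product is $1$. As before, $\mathcal U\perp\mathcal V$, so no ray is shared between the summands. The vectors $x,y,z,w$ give four distinct rays in $\mathcal V$, since their pairwise inner products have absolute value $0$ or $\rho$. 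The claim therefore reduces to showing that the twenty-one labels $A,\ldots,U$ give exactly $21$ distinct rays in $\mathcal U$, for a total of $21+4=25$.

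The main point is that the two coincidences at the symmetric point, $F=T$ and $J=U$, disappear. Here $F=(\tfrac45,0,\alpha,-\alpha)$ and $T=(\tfrac35,0,\beta,-\beta)$ are not proportional, because $\tfrac45\ne\pm\tfrac35$. Likewise $J=(-\tfrac35,0,\beta,-\beta)$ and $U=(-\tfrac45,0,\alpha,-\alpha)$ are not proportional. This is the intended ``two-pair ray splitting'': each of the two merged pairs becomes two rays, which raises the count from $19$ to $21$ in $\mathcal U$.

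Next I would check that no new coincidence appears among the other $\binom{21}{2}$ pairs. I would not compute every inner product. Instead I would use a cheap invariant: the multiset of absolute values of the coordinates, together with the support pattern.
\begin{itemize}
\item The standard vectors $A,B,C,D$ have one nonzero coordinate.
\item $O,R,S$ have two entries equal to $\rho$.
\item $E,F,G,H,N,Q,U$ have coordinate magnitudes $\{0,\tfrac45,\alpha,\alpha\}$.
\item $I,J,K,L,M,P,T$ have magnitudes $\{0,\tfrac35,\beta,\beta\}$.
\end{itemize}
Since $\alpha=\tfrac{3}{5\sqrt2}\approx0.42$, $\beta\approx0.57$ and $\rho\approx0.71$ are distinct from each other and from $\tfrac35,\tfrac45$, vectors in different groups cannot agree up to sign. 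Within each seven-element group, I would compare the position of the zero, the position of the large entry ($\tfrac45$ or $\tfrac35$), and the sign pattern up to global sign. For example, $E$ and $I$ have zero in slot $0$ and the large entry in slot $1$, but they lie in different groups. $G$ and $Q$ share zero slot $2$ and large slot $3$, but their sign patterns $(+,-)$ and $(-,-)$ on the $\alpha$ entries differ. $H$ and $N$ share zero slot $3$, but their sign patterns $(-,-)$ and $(+,-)$ on the first two entries differ. Each remaining collision within a group is ruled out by one such comparison.

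The main obstacle is only bookkeeping: making sure this within-group check is exhaustive and catches sign-pattern collisions such as $N$ against $H$, or $K$ against $P$. To make it auditable, I would give a phase-class certificate matrix $C_{25}$ analogous to \eqref{eq:class23}, whose labels are exactly $0,\ldots,24$.
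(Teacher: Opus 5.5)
Your proposal is correct. It shares the paper's skeleton: Proposition~\ref{prop:PhiabQLS} for the Latin property, $\mathcal U\perp\mathcal V$, and a count of $21+4$ rays. Where it differs is in how you certify the $21$ rays in $\mathcal U$.

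The paper computes $|\ip{X}{Y}|$ for all $\binom{21}{2}=210$ pairs of labels and observes that the maximum is $\tfrac{24}{25}<1$. You instead use that the vectors are real, so rays coincide only up to sign, and you prune with the multiset of coordinate magnitudes. This pruning is exact, not merely numerical: $\alpha=\tfrac{3\sqrt2}{10}$, $\beta=\tfrac{4\sqrt2}{10}$ and $\rho=\tfrac{5\sqrt2}{10}$ are distinct irrationals, so you can drop the decimal approximations.

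After pruning, the only pairs sharing both the zero slot and the large slot are $F$--$U$, $G$--$Q$, $H$--$N$, $J$--$T$, $K$--$P$ and $L$--$M$. The group $\{O,R,S\}$ adds the pair $O$--$R$, which you should list as well. For $F$--$U$ and $J$--$T$, the $\alpha$- (resp.\ $\beta$-) entries carry identical signs and only the large entry flips. There you genuinely need the full sign vector up to global sign, as your general rule states, rather than the partial sign comparisons used in your examples.

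Your route gives a short, hand-checkable certificate of seven sign comparisons. The paper's route is heavier, but it yields the whole overlap spectrum and runs in parallel with the proof of Theorem~\ref{thm:c23}. It also does not rely on the magnitude separation specific to $(a,b)=(\tfrac45,\tfrac35)$. At $a=b$ your two seven-element groups merge and the invariant prunes much less, which is exactly where $F=T$ and $J=U$ appear.
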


\begin{proof}
Again the quantum Latin property follows from
Proposition~\ref{prop:PhiabQLS}.  For the chosen rational parameters, direct
calculation of the off-diagonal inner products among $A,\ldots,U$ gives
\begin{align*}
 \bigl\{|\ip{X}{Y}|:X\ne Y,\ X,Y\in\{A,\ldots,U\}\bigr\}
 =\biggl\{&0,\frac{7\sqrt2}{50},\frac7{25},\frac9{25},
 \frac{3\sqrt2}{10},\frac{12}{25},\frac{2\sqrt2}{5},\\
 &\frac35,\frac{16}{25},\frac{12\sqrt2}{25},
 \frac1{\sqrt2},\frac45,\frac{24}{25}\biggr\}.
\end{align*}
In particular,
\[
 \max_{X\ne Y}|\ip{X}{Y}|=\frac{24}{25}<1,
\]
so $A,\ldots,U$ determine twenty-one distinct rays in $\mathcal U$.  Together
with the four rays $x,y,z,w$ in $\mathcal V$, this gives
\[
 \card(\Phi_{25})=21+4=25.
\]
\end{proof}

A phase-class certificate is
\begin{equation}\label{eq:class25}
C_{25}=
\begin{pmatrix}
0&1&2&3&4&5\\
2&0&6&7&8&9\\
10&2&0&11&12&13\\
14&5&15&16&17&18\\
19&4&20&18&16&21\\
22&3&23&24&18&16
\end{pmatrix}.
\end{equation}
The labels in \eqref{eq:class25} are exactly $0,1,\ldots,24$.

\begin{remark}[Controlled deformation]
The incidence pattern and all twelve orthonormal bases in \eqref{eq:Phiab}
remain fixed throughout the circle $a^2+b^2=1$.  Passing from
$a=b=1/\sqrt2$ to $(a,b)=(4/5,3/5)$ splits precisely the coincidences
$F=T$ and $J=U$, raising the cardinality from $23$ to $25$.
\end{remark}

\section{A \texorpdfstring{$\QLS(6)$}{QLS(6)} of cardinality 27}\label{sec:c27}

Let
\[
 \omega=\e^{2\pi\ii/3},
 \qquad
 F_3=\begin{pmatrix}
 1&1&1\\
 1&\omega&\omega^2\\
 1&\omega^2&\omega
 \end{pmatrix},
 \qquad
 D=\operatorname{diag}(1,1,\omega),
\]
and define
\begin{equation}\label{eq:H27}
 H_{27}=
 \begin{pmatrix}
  F_3&D F_3\\
  F_3&-D F_3
 \end{pmatrix}
 =
 \begin{pmatrix}
 1&1&1&1&1&1\\
 1&\omega&\omega^2&1&\omega&\omega^2\\
 1&\omega^2&\omega&\omega&1&\omega^2\\
 1&1&1&-1&-1&-1\\
 1&\omega&\omega^2&-1&-\omega&-\omega^2\\
 1&\omega^2&\omega&-\omega&-1&-\omega^2
 \end{pmatrix}.
\end{equation}
Since $F_3F_3^*=3I_3$ and $D$ is unitary, block multiplication gives
$H_{27}H_{27}^*=6I_6$.  Hence $H_{27}$ is a $BH(6,6)$ matrix.

Let $P$ exchange the first two rows and fix the remaining rows, and set
\[
 K_{27}=PH_{27}.
\]
Let $h_0,\ldots,h_5$ and $k_0,\ldots,k_5$ be the columns of $H_{27}$ and
$K_{27}$, respectively, and define
\begin{equation}\label{eq:Phi27}
 \phi_{ij}=\frac1{\sqrt6}(h_i\od k_j),
 \qquad
 \Phi_{27}=(\phi_{ij})_{0\leq i,j\leq5}.
\end{equation}
By Proposition~\ref{prop:mixed}, $\Phi_{27}$ is a $\QLS(6)$.

Set
\[
 d=(1,1,\omega,-1,-1,-\omega)^{\mathsf T}.
\]
Inspection of \eqref{eq:H27} gives
\[
 h_{i+3}=d\od h_i,
 \qquad 0\leq i\leq2.
\]
Because the first two entries of $d$ are equal, $Pd=d$, and therefore
\[
 k_{j+3}=d\od k_j,
 \qquad 0\leq j\leq2.
\]
It follows that
\begin{equation}\label{eq:nineequalities}
 \phi_{i+3,j}=\phi_{i,j+3},
 \qquad 0\leq i,j\leq2.
\end{equation}
Thus the lower-left $3\times3$ block repeats the upper-right $3\times3$ block.

To exclude additional phase coincidences, let $\xi=\e^{\pi\ii/3}$ and write
$(H_{27})_{rs}=\xi^{A_{rs}}$ with exponent matrix
\begin{equation}\label{eq:A27}
 A_{27}=
 \begin{pmatrix}
 0&0&0&0&0&0\\
 0&2&4&0&2&4\\
 0&4&2&2&0&4\\
 0&0&0&3&3&3\\
 0&2&4&3&5&1\\
 0&4&2&5&3&1
 \end{pmatrix}
 \quad\text{over }\Z_6.
\end{equation}
Let $p=(0\ 1)$ denote the row transposition.  Before normalization, the
exponent in coordinate $r$ of $\phi_{ij}$ is
\[
 s_{ij,r}=A_{27,ri}+A_{27,p(r),j}\pmod6.
\]
Subtract the first exponent to remove global phase and define
\[
 \sigma_{ij}=
 (s_{ij,1}-s_{ij,0},\ldots,s_{ij,5}-s_{ij,0})\in\Z_6^5.
\]
Writing a five-tuple as a five-digit string, the complete signature table is
\begin{equation}\label{eq:signatures27}
\renewcommand{\arraystretch}{1.12}
 \begin{pmatrix}
 00000&42402&24204&02335&44131&20533\\
 24024&00420&42222&20353&02155&44551\\
 42042&24444&00240&44311&20113&02515\\
 02335&44131&20533&04004&40400&22202\\
 20353&02155&44551&22022&04424&40220\\
 44311&20113&02515&40040&22442&04244
 \end{pmatrix}.
\end{equation}
Consequently, \eqref{eq:signatures27} is not merely a collision certificate:
it gives the complete explicit QLS
\begin{equation}\label{eq:explicit27}
 \Phi_{27}=\mathbf r_6\!\left[\Sigma_{27}\right],
\end{equation}
where $\Sigma_{27}$ is the $6\times6$ string matrix displayed in
\eqref{eq:signatures27}.  The table has exactly nine repeated signatures,
namely the pairs in
\eqref{eq:nineequalities}; the other eighteen signatures occur once.
Equivalently, a phase-class labeling is
\begin{equation}\label{eq:class27}
 C_{27}=
 \begin{pmatrix}
 0&1&2&3&4&5\\
 6&7&8&9&10&11\\
 12&13&14&15&16&17\\
 3&4&5&18&19&20\\
 9&10&11&21&22&23\\
 15&16&17&24&25&26
 \end{pmatrix}.
\end{equation}

\begin{theorem}\label{thm:c27}
The array $\Phi_{27}$ is a $\QLS(6)$ with cardinality $27$.
\end{theorem}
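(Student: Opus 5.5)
The plan is to split the statement into the quantum Latin property and the exact count of phase classes. The first part is immediate. $H_{27}$ is a $BH(6,6)$ matrix by the block computation after \eqref{eq:H27}. $K_{27}=PH_{27}$ is Hadamard because row permutations preserve $HH^*=6I_6$ and unimodularity. Proposition~\ref{prop:mixed} then shows that $\Phi_{27}$ is a $\QLS(6)$.

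For the cardinality, I would first reduce phase equivalence to equality of signatures. Every $\phi_{ij}$ has first coordinate $\frac1{\sqrt6}\xi^{s_{ij,0}}$, which is nonzero. Multiplying $\phi_{ij}$ by $\xi^{-s_{ij,0}}$ therefore gives a dephased representative of the same ray, and that representative equals $\mathbf r_6(\sigma_{ij})$. Two dephased vectors with first coordinate $1/\sqrt6$ are phase equivalent only if the phase is $1$, exactly as in the proof of Proposition~\ref{prop:symmetric}. Moreover, $\xi$ has order $6$, so $\mathbf r_6$ is injective on $\Z_6^5$. Hence $\phi_{ij}\sim\phi_{k\ell}$ if and only if $\sigma_{ij}=\sigma_{k\ell}$ in $\Z_6^5$, and $\card(\Phi_{27})$ equals the number of distinct strings in \eqref{eq:signatures27}.

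It remains to count these strings, in three steps.
\begin{itemize}
\item First, I would verify the table entries from $s_{ij,r}=A_{27,ri}+A_{27,p(r),j}$. This is finite arithmetic: it amounts to adding column $i$ of $A_{27}$ to row-permuted column $j$ and then subtracting the first entry.
\item Second, I would note the nine forced coincidences \eqref{eq:nineequalities}, which were already proved via $d$ and $Pd=d$. The lower-left block therefore contributes nothing new, and at most $36-9=27$ classes occur.
\item Third, I would show that the remaining $27$ signatures are pairwise distinct: the nine in the upper-left block, the nine in the upper-right block, and the nine in the lower-right block.
\end{itemize}

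The main obstacle is checking the third step cleanly rather than by brute comparison of $\binom{27}{2}$ pairs. I would look for cheap invariants that separate the blocks and then compare only within small groups.
\begin{itemize}
\item The parity and residue pattern of the third coordinate $\sigma_{\cdot,3}$ separates the upper-right block, where this digit is odd or equal to $3$, from the two diagonal blocks, where it is $0,2,4$.
\item Among the diagonal blocks, the upper-left entries have only even digits with the third coordinate in $\{0,2,4\}$. The lower-right block has first digit $0$, $2$ or $4$ paired with a distinctive fourth digit $0$ or $4$. Alternatively, sorting by the first two digits separates these entries.
\item Within each block, the row index $i$ and column index $j$ are recovered from pairs of digits, for instance by reading $\sigma_{ij,1}$ and $\sigma_{ij,2}$, which are visibly distinct across the nine cells.
\end{itemize}

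If these invariants do not fully separate the cells, I would fall back on sorting the $27$ strings lexicographically and observing that no two adjacent strings agree. This is equivalent to checking that the labeling \eqref{eq:class27} uses exactly the labels $0,\dots,26$, with label repeats only at the positions listed in \eqref{eq:nineequalities}. The conclusion is $\card(\Phi_{27})=36-9=27$.
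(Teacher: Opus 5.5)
Your proof follows the paper's. Proposition~\ref{prop:mixed} gives the Latin property, and the cardinality is the number of distinct dephased signatures in \eqref{eq:signatures27}: nine forced pairs \eqref{eq:nineequalities} plus eighteen singletons. Your argument that equal rays means equal signatures (nonzero first coordinate, dephasing, injectivity of $\mathbf r_6$ on $\Z_6^5$) spells out what the paper leaves implicit for this theorem.

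However, two of your shortcut invariants are false as stated. The argument therefore actually rests on your lexicographic fallback, which is exactly the paper's certificate.

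\emph{The fourth digit does not separate the two diagonal blocks.} In both blocks it runs through $0,2,4$. For instance, $22022$ (lower right) and $24024$ (upper left) have the same first and fourth digits. The first-two-digit test is the one that works: these prefixes lie in $\{00,42,24\}$, $\{02,44,20\}$ and $\{04,40,22\}$ for the upper-left, upper-right and lower-right blocks, respectively.

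\emph{The pair $(\sigma_{ij,1},\sigma_{ij,2})$ does not recover the cell.} With the paper's indexing $\sigma_{ij,r}=s_{ij,r}-s_{ij,0}$, this pair takes only three values in each block (e.g.\ $00000$, $00420$, $00240$). The pair $(\sigma_{ij,3},\sigma_{ij,4})$ does work. Since $s_{ij,0}=A_{27,1j}$, within a block $\sigma_{ij,3}=A_{27,3i}+A_{27,3j}-A_{27,1j}$ depends only on $j$, and $\sigma_{ij,4}=A_{27,4i}+A_{27,4j}-A_{27,1j}$ depends only on $i$; each takes three distinct values. If you intended zero-based indices, i.e.\ the second and third digits, your claim is in fact true.
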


\begin{proof}
The quantum Latin property follows from Proposition~\ref{prop:mixed}.  The
signature certificate consists of nine two-element phase classes and eighteen
singleton classes.  Therefore
\[
 \card(\Phi_{27})=9+18=27.
\]
\end{proof}

\begin{remark}[Row-permutation classification]
For the fixed matrix $H_{27}$, enumeration of all $720$ row permutations
$P\in S_6$ gives
\[
 \begin{array}{c|rrrr}
 \card\Phi(H_{27},PH_{27})&9&12&27&36\\ \hline
 \#\{P\in S_6\}&12&36&36&636.
 \end{array}
\]
Thus cardinality $27$ occurs for thirty-six row permutations of this one
$BH(6,6)$ matrix.
\end{remark}

\section{A \texorpdfstring{$\QLS(6)$}{QLS(6)} of cardinality 32}\label{sec:c32}

Let $\eta=\e^{\pi\ii/3}$.  Consider the exponent matrices over $\Z_6$
\begin{equation}\label{eq:A32B32}
 A_{32}=
 \begin{pmatrix}
 0&0&0&0&0&0\\
 0&0&2&2&4&4\\
 0&2&0&4&2&4\\
 0&2&4&0&4&2\\
 0&4&2&4&0&2\\
 0&4&4&2&2&0
 \end{pmatrix},
 \qquad
 B_{32}=
 \begin{pmatrix}
 0&0&0&0&0&0\\
 0&0&2&2&4&4\\
 0&0&4&4&2&2\\
 0&3&0&3&1&4\\
 0&3&2&5&5&2\\
 0&3&4&1&3&0
 \end{pmatrix}.
\end{equation}
Define
\[
 (H_{32})_{rs}=\eta^{(A_{32})_{rs}},
 \qquad
 (K_{32})_{rs}=\eta^{(B_{32})_{rs}}.
\]
The matrix $H_{32}$ is Tao's $BH(6,3)$ matrix written with even exponents
modulo $6$.

\begin{proposition}\label{prop:H32K32had}
Both $H_{32}$ and $K_{32}$ are dephased complex Hadamard matrices.
\end{proposition}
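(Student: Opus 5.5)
Dephasedness is read off directly from \eqref{eq:A32B32}: the first row and first column of both $A_{32}$ and $B_{32}$ are zero, so every entry in the first row and first column of $H_{32}$ and $K_{32}$ equals $\eta^0=1$. Unimodularity of all entries is automatic, since every entry is a power of $\eta$. It remains to show $H_{32}^*H_{32}=6I_6$ and $K_{32}^*K_{32}=6I_6$; for square matrices this also gives $HH^*=6I_6$. As in Proposition~\ref{prop:H19had}, I reduce orthogonality of columns $j\neq k$ to the vanishing of
\[
 \sum_{r=0}^5 \eta^{(A)_{rk}-(A)_{rj}},
\]
and classify the residue multisets of exponent differences modulo $6$.

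For $H_{32}$ I would not recompute anything. Every exponent in $A_{32}$ is even and $A_{32}=2A_{21}$ entrywise modulo $6$, because $\eta^2=\omega$ and the entries $0,2,4$ correspond to $0,1,2$ in \eqref{eq:A21}. Hence $H_{32}=H_{21}$, and Proposition~\ref{prop:H21had} applies directly.

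For $K_{32}$ there are fifteen column pairs, and I would list the six coordinatewise differences for each.
\begin{itemize}
\item Pairs involving column $0$ are just the columns themselves. Column $1$ is $(0,0,0,3,3,3)$, which gives $3+3\eta^3=0$. Columns $2$ and $3$ contain each of $0,2,4$ twice, giving $2(1+\omega+\omega^2)=0$. Columns $4$ and $5$ are similar.
\item For the remaining pairs, I expect each multiset to decompose into disjoint vanishing blocks of two kinds: antipodal pairs $\{c,c+3\}$, which contribute $\eta^c(1+\eta^3)=0$, and cosets $\{c,c+2,c+4\}$, which contribute $\eta^c(1+\omega+\omega^2)=0$.
\end{itemize}
An example is columns $1,2$. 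The differences are $(0,2,4,3,5,1)$, which form the full set of sixth roots of unity, so the sum is zero.

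I will tabulate the patterns as in Proposition~\ref{prop:H19had}. The main obstacle is simply carrying out all pairs correctly. This is a finite check, since any six-term vanishing sum of sixth roots of unity is a union of such blocks. If some pair fails, that would indicate a misprint in $B_{32}$ rather than a gap in the method.
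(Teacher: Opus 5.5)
\textbf{The $H_{32}$ step fails.} The identity $A_{32}=2A_{21}$ that you rely on is false. Row $2$ of $A_{32}$ is $(0,2,0,4,2,4)$, whereas twice row $2$ of $A_{21}$ is $(0,2,0,4,4,2)$. Rows $3,4,5$ disagree in the same way (in columns $4,5$ for row $3$, and columns $2,3$ for rows $4,5$). So $H_{32}\neq H_{21}$, and Proposition~\ref{prop:H21had} cannot be invoked as you do. The paper's phrase ``Tao's $BH(6,3)$ matrix written with even exponents'' refers to an equivalent form, not literally the same matrix.

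You can repair this in one of two ways:
\begin{itemize}
\item Do what the paper does. Check directly that every difference of two distinct columns of $A_{32}$ has residue multiset $\{0,0,2,2,4,4\}$, so each off-diagonal inner product is $2(1+\eta^2+\eta^4)=0$.
\item Keep your shortcut but make it correct. Replace the entries $0,2,4$ of $A_{32}$ by $0,1,2$. The resulting matrix is $A_{21}$ with columns $4,5$ swapped and rows $4,5$ swapped. Hence $H_{32}=\Pi H_{21}\Pi$, where $\Pi$ is the permutation matrix of the transposition $(4\,5)$. Simultaneous row and column permutation preserves the Hadamard property. Since index $0$ is fixed, it also preserves dephasing.
\end{itemize}
Either way, the step as written rests on an incorrect equality and must be replaced.

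\textbf{The $K_{32}$ part is the paper's argument, with one slip.} Your first bullet says columns $2$ and $3$ of $B_{32}$ each contain $0,2,4$ twice. In fact column $3$ is $(0,2,4,3,5,1)$, and column $4$ (which you call ``similar'') is $(0,4,2,1,5,3)$. Both are complete residue systems; only columns $2$ and $5$ have the form $\{0,0,2,2,4,4\}$. This is harmless, since the sum of all sixth roots of unity also vanishes.

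Carrying out your fifteen-pair check gives exactly the paper's three patterns:
\begin{itemize}
\item $\{0,0,0,3,3,3\}$ for pairs $01,23,45$;
\item $\{0,0,2,2,4,4\}$ for pairs $02,05,13,14,25,34$;
\item $\{0,1,2,3,4,5\}$ for pairs $03,04,12,15,24,35$.
\end{itemize}
Each of these sums to zero, so $K_{32}$ is Hadamard. Your general remark that every six-term vanishing sum of sixth roots of unity splits into antipodal pairs and cosets of $\{0,2,4\}$ is true, but the proof does not need it.
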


\begin{proof}
For $H_{32}$, every difference of two distinct columns has residue multiset
\[
 \{0,0,2,2,4,4\},
\]
whose root sum is $2(1+\eta^2+\eta^4)=0$.  For $K_{32}$, the difference
multisets fall into the three patterns
\[
\begin{array}{c|c|c}
\text{pattern}&\text{residues modulo }6&\text{column pairs}\\
\hline
Q_1&\{0,0,0,3,3,3\}&01,23,45\\
Q_2&\{0,0,2,2,4,4\}&02,05,13,14,25,34\\
Q_3&\{0,1,2,3,4,5\}&03,04,12,15,24,35.
\end{array}
\]
The corresponding sums are
$3(1+\eta^3)$, $2(1+\eta^2+\eta^4)$, and
$\sum_{r=0}^{5}\eta^r$, respectively, and all vanish.  Hence both Gram
matrices equal $6I_6$.
\end{proof}

Let $h_0,\ldots,h_5$ and $k_0,\ldots,k_5$ be the columns of $H_{32}$ and
$K_{32}$, and define
\[
 \Phi_{32,ij}=\frac1{\sqrt6}(h_i\od k_j),
 \qquad
 \Phi_{32}=(\Phi_{32,ij})_{0\leq i,j\leq5}.
\]
By Proposition~\ref{prop:mixed}, $\Phi_{32}$ is a $\QLS(6)$.

For each cell, set
\[
 s_{ij,r}=(A_{32})_{ri}+(B_{32})_{rj}\pmod6,
 \qquad
 \sigma_{ij}=(s_{ij,1}-s_{ij,0},\ldots,s_{ij,5}-s_{ij,0})\in\Z_6^5.
\]
The complete signature table is
\begin{equation}\label{eq:signatures32}
\renewcommand{\arraystretch}{1.12}
 \begin{pmatrix}
 00000&00333&24024&24351&42153&42420\\
 02244&02511&20202&20535&44331&44004\\
 20424&20151&44442&44115&02511&02244\\
 24042&24315&42000&42333&00135&00402\\
 42402&42135&00420&00153&24555&24222\\
 44220&44553&02244&02511&20313&20040
 \end{pmatrix}.
\end{equation}
Applying $\mathbf r_6$ entrywise to this table gives the complete explicit
array:
\begin{equation}\label{eq:explicit32}
 \Phi_{32}=\mathbf r_6\!\left[\Sigma_{32}\right],
\end{equation}
where $\Sigma_{32}$ denotes the string matrix in
\eqref{eq:signatures32}.  Here each five-digit string denotes an element of
$\Z_6^5$.  Exactly two
signatures occur more than once:
\begin{align*}
 02244&:\quad (1,0),(2,5),(5,2),\\
 02511&:\quad (1,1),(2,4),(5,3).
\end{align*}
All remaining thirty signatures occur once.

\begin{theorem}\label{thm:c32}
The array $\Phi_{32}$ is a $\QLS(6)$ with cardinality $32$.
\end{theorem}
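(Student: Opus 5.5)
The quantum Latin property needs no new work. Proposition~\ref{prop:H32K32had} shows that $H_{32}$ and $K_{32}$ are complex Hadamard matrices of order $6$, so Proposition~\ref{prop:mixed} immediately gives $\Phi_{32}\in\QLS(6)$. Everything else concerns the cardinality.

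The plan is to reduce phase equivalence to equality of signatures in $\Z_6^5$, as in the proofs of Theorems~\ref{thm:c19} and \ref{thm:c27}. Both $H_{32}$ and $K_{32}$ are dephased, so the first coordinate of every $\Phi_{32,ij}$ equals $1/\sqrt6$. A relation $\Phi_{32,ij}=\lambda\Phi_{32,k\ell}$ with $|\lambda|=1$ therefore forces $\lambda=1$, exactly as in Proposition~\ref{prop:symmetric}. Every coordinate is a sixth root of unity $\eta^{s}$, and $s\mapsto\eta^s$ is injective on $\Z_6$. Hence $\Phi_{32,ij}=\Phi_{32,k\ell}$ holds if and only if $\sigma_{ij}=\sigma_{k\ell}$. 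Here $s_{ij,0}=0$ already, so $\sigma_{ij}$ is just $(s_{ij,1},\ldots,s_{ij,5})$, and $\Phi_{32,ij}=\mathbf r_6(\sigma_{ij})$. This also justifies \eqref{eq:explicit32}.

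Next I check the table \eqref{eq:signatures32} against the definition $s_{ij,r}=(A_{32})_{ri}+(B_{32})_{rj}$. One shortcut is that row $i$ of the table is the fixed vector given by column $i$ of $A_{32}$ (without its first entry) added to each column of $B_{32}$. So row $0$ is simply the columns of $B_{32}$, and the other rows are translates of it. For example, $(A_{32})_{\bullet1}=(0,0,2,2,4,4)$, which gives $\sigma_{10}=02244$, matching the table. I then confirm the two coincidences directly from the matrices: $\sigma_{10}=\sigma_{25}=\sigma_{52}=02244$ and $\sigma_{11}=\sigma_{24}=\sigma_{53}=02511$. Each is a short identity between sums of columns of $A_{32}$ and $B_{32}$.

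The main obstacle is proving that no other collisions occur among the $36$ entries. My first step uses the translate structure. Within a row, the six entries are distinct because the columns of $B_{32}$ are distinct. A collision between rows $i$ and $i'$ therefore means that the difference of columns $i$ and $i'$ of $A_{32}$ equals a difference of two columns of $B_{32}$. I would list the $15$ column differences of $A_{32}$ and the $30$ ordered column differences of $B_{32}$, both in $\Z_6^5$, and intersect the two lists. The intersection should account for exactly the collisions listed above. As a cross-check, I would sort the $36$ strings lexicographically and verify that the only repeats are the two triples.

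Finally I count. The $36$ cells form $30$ singleton classes plus two classes of size three, so
\[
\card(\Phi_{32})=30+2=32.
\]
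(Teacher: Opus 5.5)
Your proposal is correct and follows the paper's own proof. Proposition~\ref{prop:mixed} gives the QLS property, dephasing reduces ray equality to equality of signatures in $\Z_6^5$, and table \eqref{eq:signatures32} then has exactly two three-element classes and thirty singletons. Your row-translate and difference-set check is only a more systematic way to certify the collision count that the paper reads off the table directly.
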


\begin{proof}
The quantum Latin property follows from Proposition~\ref{prop:mixed}.  Since
both exponent matrices are dephased, the first coordinate of every vector in
$\Phi_{32}$ is $1/\sqrt6$.  Thus two cells represent the same ray if and only
if their dephased signatures agree.  Table~\eqref{eq:signatures32} has two
three-element phase classes and thirty singleton classes, so
\[
 \card(\Phi_{32})=2+30=32.
\]
\end{proof}

The corresponding phase-class matrix is
\begin{equation}\label{eq:class32}
C_{32}=
\begin{pmatrix}
0&1&2&3&4&5\\
6&7&8&9&10&11\\
12&13&14&15&7&6\\
16&17&18&19&20&21\\
22&23&24&25&26&27\\
28&29&6&7&30&31
\end{pmatrix}.
\end{equation}
The labels $6$ and $7$ each occur three times, and every other label occurs
once.

\section{A \texorpdfstring{$\QLS(6)$}{QLS(6)} of cardinality 35}\label{sec:c35}

Let $\theta=\e^{\pi\ii/6}$ and consider the exponent matrix over $\Z_{12}$
\begin{equation}\label{eq:A35}
 A_{35}=
 \begin{pmatrix}
 0&0&0&0&0&0\\
 0&0&3&6&6&9\\
 0&3&9&1&7&6\\
 0&6&4&10&1&7\\
 0&6&10&7&4&1\\
 0&9&6&4&10&3
 \end{pmatrix}.
\end{equation}
Define $(H_{35})_{rs}=\theta^{(A_{35})_{rs}}$.  Let $P$ exchange rows $4$
and $5$ and fix rows $0,1,2,3$, and set
\begin{equation}\label{eq:K35}
 K_{35}=P H_{35}^{\mathsf T}.
\end{equation}
Equivalently, the exponent matrix of $K_{35}$ is
\begin{equation}\label{eq:B35}
 B_{35}=
 \begin{pmatrix}
 0&0&0&0&0&0\\
 0&0&3&6&6&9\\
 0&3&9&4&10&6\\
 0&6&1&10&7&4\\
 0&9&6&7&1&3\\
 0&6&7&1&4&10
 \end{pmatrix}.
\end{equation}

\begin{proposition}\label{prop:H35K35had}
Both $H_{35}$ and $K_{35}$ are dephased complex Hadamard matrices.
\end{proposition}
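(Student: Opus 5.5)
The plan is to reduce everything to a statement about $H_{35}$ alone and then certify that statement by finite residue arithmetic, in the same way as Propositions~\ref{prop:H19had} and~\ref{prop:H32K32had}.

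\emph{Dephasing.} Both claims of dephasing can be read off the displayed exponent matrices. The first row and first column of $A_{35}$ are zero, so $H_{35}$ is dephased. Transposing preserves this property, and $P$ fixes row $0$, so $K_{35}$ is dephased as well. This is also visible directly from the zero first row and first column of $B_{35}$ in \eqref{eq:B35}.

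\emph{Reduction to $H_{35}$.} The Hadamard property of $K_{35}$ follows formally from that of $H_{35}$:
\[
 K_{35}^*K_{35}=\overline{H_{35}}\,P^{\mathsf T}P\,H_{35}^{\mathsf T}
 =\overline{H_{35}H_{35}^*}.
\]
For a square matrix, $H^*H=6I_6$ is equivalent to $HH^*=6I_6$. Hence once $H_{35}^*H_{35}=6I_6$ is established, we get $K_{35}^*K_{35}=6I_6$ too. Entries of modulus one are automatic for both matrices. So the only real work is checking the fifteen off-diagonal column inner products of $H_{35}$.

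\emph{Checking the column inner products.} For columns $j<k$, the inner product is
\[
 \sum_{r=0}^5\theta^{(A_{35})_{rk}-(A_{35})_{rj}}.
\]
I would compute the six residues modulo $12$ for each pair and record them in a pattern table, as in Section~\ref{sec:c19}.

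To certify that each sum vanishes, I would exhibit for each multiset a partition into blocks of two standard types:
\begin{itemize}
\item antipodal pairs $\{t,t+6\}$, whose root sum is $\theta^t(1+\theta^6)=0$;
\item equilateral triples $\{t,t+4,t+8\}$, whose root sum is $\theta^t(1+\theta^4+\theta^8)=0$.
\end{itemize}
Two sample pairs illustrate this.
\begin{itemize}
\item Columns $0,1$ give $\{0,0,3,6,6,9\}=\{0,6\}\cup\{0,6\}\cup\{3,9\}$.
\item Columns $1,2$ give $\{0,3,6,10,4,9\}=\{0,6\}\cup\{3,9\}\cup\{4,10\}$.
\end{itemize}
I expect every pair to split into three antipodal pairs or into two triples. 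The table would group the fifteen column pairs by the shape of this decomposition.

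\emph{Main obstacle.} The main obstacle is simply the bookkeeping: fifteen residue multisets modulo $12$ must be computed without error. If some multiset is not a union of such blocks, the sum would not vanish, since every vanishing six-term sum of twelfth roots is such a union. In that case I would recheck the entries of \eqref{eq:A35}.
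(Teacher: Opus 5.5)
Your proposal is correct and is essentially the paper's proof. The paper sorts the fifteen column-difference multisets of $A_{35}$ modulo $12$ into six patterns, each a union of three antipodal pairs $\{t,t+6\}$ (so your equilateral-triple blocks are never needed, which the full computation confirms), and then carries the Hadamard and dephasing properties over to $K_{35}=PH_{35}^{\mathsf T}$ just as you do, with your identity $K_{35}^*K_{35}=\overline{H_{35}H_{35}^*}$ spelling out its remark that transposition and row permutation preserve the Hadamard property.
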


\begin{proof}
For $H_{35}$, the difference multisets of distinct columns are
\[
\begin{array}{c|c|c}
\text{pattern}&\text{residues modulo }12&\text{column pairs}\\
\hline
R_1&\{0,0,3,6,6,9\}&01,34\\
R_2&\{0,3,4,6,9,10\}&02,12,23,24\\
R_3&\{0,1,4,6,7,10\}&03,04,13,14\\
R_4&\{0,1,3,6,7,9\}&05,15\\
R_5&\{0,3,3,6,9,9\}&25\\
R_6&\{0,3,5,6,9,11\}&35,45.
\end{array}
\]
Every displayed multiset is a union of three antipodal pairs
$\{t,t+6\}$ modulo $12$.  Since $\theta^{t+6}=-\theta^t$, every off-diagonal
column inner product vanishes.  Thus $H_{35}$ is Hadamard.  Transposition and
row permutation preserve the Hadamard property, so $K_{35}$ is Hadamard as
well.  Both matrices are visibly dephased.
\end{proof}

Let $h_0,\ldots,h_5$ and $k_0,\ldots,k_5$ be the columns of $H_{35}$ and
$K_{35}$, and set
\[
 \Phi_{35,ij}=\frac1{\sqrt6}(h_i\od k_j),
 \qquad
 \Phi_{35}=(\Phi_{35,ij})_{0\leq i,j\leq5}.
\]
Again, Proposition~\ref{prop:mixed} gives $\Phi_{35}\in\QLS(6)$.

Define the dephased signatures $\rho_{ij}\in\Z_{12}^5$ from $A_{35}$ and
$B_{35}$ exactly as above.  In the following table, the symbols
$\mathrm A$ and $\mathrm B$ denote the residues $10$ and $11$:
\begin{equation}\label{eq:signatures35}
\renewcommand{\arraystretch}{1.12}
 \begin{pmatrix}
 00000&03696&39167&64\mathrm A71&6\mathrm A714&9643\mathrm A\\
 03669&06033&30704&6741\mathrm A&61171&99\mathrm A97\\
 394\mathrm A6&30\mathrm A70&66541&91257&97\mathrm{BB}\mathrm A&03814\\
 61\mathrm A74&6444\mathrm A&9\mathrm{AB}1\mathrm B&05825&0\mathrm B588&372\mathrm A2\\
 6714\mathrm A&6\mathrm A714&942\mathrm A5&0\mathrm{BBBB}&05852&31578\\
 96713&991\mathrm A9&0387\mathrm A&3\mathrm A584&34227&60\mathrm B41
 \end{pmatrix}.
\end{equation}
Thus the full explicit vector array is
\begin{equation}\label{eq:explicit35}
 \Phi_{35}=\mathbf r_{12}\!\left[\Sigma_{35}\right],
\end{equation}
where $\Sigma_{35}$ is the string matrix in \eqref{eq:signatures35}.
Only the signature $6\mathrm A714$ is repeated, at the cells $(0,4)$ and
$(4,1)$; the other thirty-four signatures occur once.

\begin{theorem}\label{thm:c35}
The array $\Phi_{35}$ is a $\QLS(6)$ with cardinality $35$.
\end{theorem}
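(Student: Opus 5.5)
The plan follows the template of Theorems~\ref{thm:c27} and~\ref{thm:c32}. The quantum Latin property is immediate. By Proposition~\ref{prop:H35K35had}, $H_{35}$ and $K_{35}$ are complex Hadamard matrices of order $6$, so Proposition~\ref{prop:mixed} shows that $\Phi_{35}$ is a $\QLS(6)$. It remains to count phase classes.

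The first step is to reduce ray equality to equality of exponent signatures. Both $A_{35}$ and $B_{35}$ have zero first row. Hence every vector $\Phi_{35,ij}$ has first coordinate $1/\sqrt6$, and all its coordinates are $12$th roots of unity divided by $\sqrt6$. If $\Phi_{35,ij}=\lambda\Phi_{35,k\ell}$ with $|\lambda|=1$, comparing first coordinates forces $\lambda=1$. Consequently two cells represent the same ray if and only if the vectors are equal. Since $\theta$ has order exactly $12$, this holds if and only if the exponent tuples agree coordinatewise in $\Z_{12}$. So phase equivalence of cells is equivalent to equality of the dephased signatures
\[
 \rho_{ij,r}=(A_{35})_{ri}+(B_{35})_{rj}\pmod{12},\qquad 1\le r\le5.
\]
This is exactly the content of \eqref{eq:explicit35}.

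The second step is to certify the table \eqref{eq:signatures35}. Each entry is obtained by adding column $i$ of $A_{35}$ to column $j$ of $B_{35}$ modulo $12$. As a spot check, $\rho_{04}$ is column $4$ of $B_{35}$, namely $6\mathrm A714$. Likewise $\rho_{41}$ is column $4$ of $A_{35}$ plus column $1$ of $B_{35}$:
\[
 (6,7,1,4,10)+(0,3,6,9,6)=(6,10,7,13,16)\equiv(6,\mathrm A,7,1,4)\pmod{12},
\]
which again gives $6\mathrm A714$. So the one coincidence is genuine.

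The third step, and the main obstacle, is showing that these two cells give the only coincidence among all $\binom{36}{2}$ pairs. This is a finite check, and I would organize it to be transparent rather than brute force. Within a fixed row $i$, the signatures differ by the distinct columns of $B_{35}$, so they cannot collide. Within a fixed column $j$, they differ by the distinct columns of $A_{35}$. So only cells in different rows and different columns need comparison.

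For those, I would sort the $36$ five-digit strings lexicographically and confirm that only $6\mathrm A714$ appears twice. To make the check readable, I would first bucket the strings by their leading digit $\rho_{ij,1}$, which equals $(A_{35})_{1i}+(B_{35})_{1j}$. This coordinate alone already separates many cells. Within each bucket, I would compare the second digit, and so on.

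This yields one two-element class and $34$ singleton classes, so
\[
 \card(\Phi_{35})=1+34=35.
\]
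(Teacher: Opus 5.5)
Your proposal is correct and follows essentially the same route as the paper. In both, Proposition~\ref{prop:mixed} gives the quantum Latin property, and the dephasing of $H_{35}$ and $K_{35}$ reduces ray equality to equality of signatures in $\Z_{12}^5$. The count then comes from table \eqref{eq:signatures35}, whose only repeat is $6\mathrm A714$ at $(0,4)$ and $(4,1)$, giving $1+34=35$. Your row/column exclusion and bucketing by first digit (which only takes the values $0,3,6,9$) are harmless extra organization of the same finite check. Carrying that check out confirms there are no other coincidences.
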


\begin{proof}
The quantum Latin property follows from Proposition~\ref{prop:mixed}.  Since
$H_{35}$ and $K_{35}$ are dephased, equality of rays is equivalent to equality
of the signatures in \eqref{eq:signatures35}.  There is one two-element phase
class and thirty-four singleton classes.  Therefore
\[
 \card(\Phi_{35})=1+34=35.
\]
\end{proof}

Equivalently, the phase-class matrix is
\begin{equation}\label{eq:class35}
C_{35}=
\begin{pmatrix}
0&1&2&3&4&5\\
6&7&8&9&10&11\\
12&13&14&15&16&17\\
18&19&20&21&22&23\\
24&4&25&26&27&28\\
29&30&31&32&33&34
\end{pmatrix}.
\end{equation}
Only the label $4$ is repeated, at the cells $(0,4)$ and $(4,1)$.

\section{Consequences for the order-six spectrum}\label{sec:spectrum}

Combining the known order-six cardinalities summarized in
\cite[Table~6]{ZhangLvCao2026} with Theorems~\ref{thm:c23},
\ref{thm:c25}, \ref{thm:c27}, \ref{thm:c32}, and \ref{thm:c35} supplies the
additional cases $23$, $25$, $27$, $32$, and $35$.

\begin{corollary}\label{cor:order6spectrum}
For every
\[
 c\in[6,36]\setminus\{7,29\},
\]
there exists a quantum Latin square of order $6$ with cardinality $c$.
Moreover, no $\QLS(6)$ has cardinality $7$.
\end{corollary}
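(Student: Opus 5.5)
The corollary is essentially bookkeeping, so the plan is to assemble it from three sources: the previously attained values, the new theorems of this paper, and the general impossibility result for cardinality $n+1$.

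First, the range. By definition $\card(\Phi)$ counts phase classes among $36$ cells. Each row is an orthonormal basis, so its six vectors are pairwise orthogonal and hence lie in six distinct rays. Therefore $6\le\card(\Phi)\le36$, and only integers in $[6,36]$ need to be considered.

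Second, the attained values. I would quote \cite[Table~6]{ZhangLvCao2026} for every $c\in[6,36]$ except $7$ and the open values $23,25,27,29,32,35$, checking against that table that these are exactly the gaps it records. Then:
\begin{itemize}
\item Theorem~\ref{thm:c23} supplies $23$;
\item Theorem~\ref{thm:c25} supplies $25$;
\item Theorem~\ref{thm:c27} supplies $27$;
\item Theorem~\ref{thm:c32} supplies $32$;
\item Theorem~\ref{thm:c35} supplies $35$.
\end{itemize}
Theorems~\ref{thm:c19} and~\ref{thm:c21} give independent realizations of $19$ and $21$, which are already covered by the table. The union is $[6,36]\setminus\{7,29\}$.

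Third, the nonexistence statement. I would invoke the general theorem of \cite{ZangEtAl2025,ZhangJi2026} that no $\QLS(n)$ has cardinality $n+1$, specialized to $n=6$.

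The main obstacle is not mathematical but one of accurately matching the cited table, that is, confirming that no value besides $7$, $29$ and the five newly constructed ones was missing there. If I wanted to reduce dependence on the table for the small values, I would fall back on the standard constructions: classical Latin squares give $6$, and cardinality $36$ follows from, e.g., generic mixed Schur products as in the row-permutation count in the remark after Theorem~\ref{thm:c27}. I would nonetheless rely on the citation for the intermediate values.
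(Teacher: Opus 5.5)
Your proposal is correct and follows the paper's proof: cite \cite[Table~6]{ZhangLvCao2026} for the previously attained values, take $23$, $25$, $27$, $32$, $35$ from Theorems~\ref{thm:c23}, \ref{thm:c25}, \ref{thm:c27}, \ref{thm:c32} and \ref{thm:c35}, and exclude $7$ by the general cardinality-$(n+1)$ nonexistence theorem. Your range bound $6\le\card(\Phi)\le36$ and your fallbacks for $6$ and $36$ are harmless extras that the corollary does not need.
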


\begin{proof}
The previously attainable values are included in the summary cited above,
and the five additional values are supplied by the present paper.  The
general nonexistence theorem for cardinality $n+1$ excludes $7$ when $n=6$
\cite{ZangEtAl2025,ZhangJi2026}.
\end{proof}

Thus the only order-six cardinality whose existence remains undetermined is
\[
 \boxed{29}.
\]

\section{Discussion}

The seven cardinalities are realized by three mechanisms.  For cardinality
$19$, arithmetic in $\Z_8$ creates one prescribed triple coincidence among
the twenty-one unordered Schur products.  For cardinality $21$, Tao's
$BH(6,3)$ matrix behaves as an additive Sidon-type configuration: all
twenty-one column-sum signatures are distinct.  This shows directly that the
symmetric Schur-product upper bound is attained.

The direct-sum family shows how cardinality can be varied while preserving the
entire row-column incidence pattern.  At the symmetric point, two pairs of
four-dimensional rays coincide, giving $19+4=23$ rays.  The rational
$3$--$4$--$5$ parameter point separates both pairs and gives $21+4=25$ rays.
Thus cardinality is controlled by ray splitting rather than by changing the
linked orthonormal bases.

The mixed Hadamard construction removes the commutativity constraint
$v_{ij}=v_{ji}$ and realizes three qualitatively different collision patterns.
For cardinality $27$, a row permutation forces nine two-element classes.  For
cardinality $32$, two signatures each occur three times and all other cells are
singletons.  For cardinality $35$, a $BH(6,12)$ matrix and a row-permuted
transpose leave exactly one repeated pair.  The last construction is one ray
short of maximal cardinality and illustrates how sparse the collision pattern
can be in a mixed Butson product.

All ray counts are exact finite certificates in residue rings; numerical
linear algebra is not used in the proofs.  The sole remaining order-six case,
cardinality $29$, therefore requires a collision pattern not supplied by the
constructions presented here.

\section{Conclusion}

We have constructed quantum Latin squares of order six with cardinalities
$19$, $21$, $23$, $25$, $27$, $32$, and $35$.  Each construction has an exact
finite certificate: modular exponent signatures for $19$, $21$, $27$, $32$,
and $35$, and inner-product calculations in $\mathbb Q(\sqrt2)$ for $23$ and
$25$.  In combination with the spectrum status reported by Zhang, Lv, and Cao,
the new cardinalities settle five previously open order-six cases and leave
$29$ as the only unresolved cardinality.

\section*{Acknowledgments}

The author gratefully acknowledges support from the Natural Science Research
Project of Jiangsu Higher Education Institutions of China under Grant No.
24KJB520033.  The author also acknowledges the High Performance Computing
Center of the School of Mathematics and Statistics, Nantong University, and
the National Supercomputing Center in Kunshan for computational support.

\appendix

\section{Signature certificate for cardinality 19}\label{app:c19cert}

For each unordered pair $0\leq i\leq j\leq5$, Table~\ref{tab:sig19} lists the
signature \eqref{eq:signature19}.  The single repeated signature occurs for
$(0,1)$, $(2,5)$, and $(3,4)$.

\begin{longtable}{c@{\qquad}c}
\caption{Exponent signatures for the $21$ unordered Schur products of
$H_{19}$.}\label{tab:sig19}\\
\toprule
$(i,j)$ & $\sigma_{ij}$\\
\midrule
\endfirsthead
\toprule
$(i,j)$ & $\sigma_{ij}$\\
\midrule
\endhead
\bottomrule
\endfoot
$(0,0)$ & $(0,0,0,0,0,0)$\\
$(0,1)$ & $(0,4,2,6,6,2)$\\
$(0,2)$ & $(0,2,4,1,5,6)$\\
$(0,3)$ & $(0,6,3,4,2,7)$\\
$(0,4)$ & $(0,6,7,2,4,3)$\\
$(0,5)$ & $(0,2,6,5,1,4)$\\
$(1,1)$ & $(0,0,4,4,4,4)$\\
$(1,2)$ & $(0,6,6,7,3,0)$\\
$(1,3)$ & $(0,2,5,2,0,1)$\\
$(1,4)$ & $(0,2,1,0,2,5)$\\
$(1,5)$ & $(0,6,0,3,7,6)$\\
$(2,2)$ & $(0,4,0,2,2,4)$\\
$(2,3)$ & $(0,0,7,5,7,5)$\\
$(2,4)$ & $(0,0,3,3,1,1)$\\
$(2,5)$ & $(0,4,2,6,6,2)$\\
$(3,3)$ & $(0,4,6,0,4,6)$\\
$(3,4)$ & $(0,4,2,6,6,2)$\\
$(3,5)$ & $(0,0,1,1,3,3)$\\
$(4,4)$ & $(0,4,6,4,0,6)$\\
$(4,5)$ & $(0,0,5,7,5,7)$\\
$(5,5)$ & $(0,4,4,2,2,0)$\\
\end{longtable}

\section{Signature certificate for cardinality 21}\label{app:c21cert}

For each unordered pair $0\leq i\leq j\leq5$, Table~\ref{tab:sig21} lists
\[
 \tau_{ij}=A_{21,\bullet i}+A_{21,\bullet j}\pmod3.
\]
All twenty-one rows are distinct, providing a finite certificate for
Theorem~\ref{thm:c21}.

\begin{longtable}{c@{\qquad}c}
\caption{Exponent signatures for the $21$ unordered Schur products of
Tao's matrix $H_{21}$.}\label{tab:sig21}\\
\toprule
$(i,j)$ & $\tau_{ij}$\\
\midrule
\endfirsthead
\toprule
$(i,j)$ & $\tau_{ij}$\\
\midrule
\endhead
\bottomrule
\endfoot
$(0,0)$ & $(0,0,0,0,0,0)$\\
$(0,1)$ & $(0,0,1,1,2,2)$\\
$(0,2)$ & $(0,1,0,2,2,1)$\\
$(0,3)$ & $(0,1,2,0,1,2)$\\
$(0,4)$ & $(0,2,2,1,0,1)$\\
$(0,5)$ & $(0,2,1,2,1,0)$\\
$(1,1)$ & $(0,0,2,2,1,1)$\\
$(1,2)$ & $(0,1,1,0,1,0)$\\
$(1,3)$ & $(0,1,0,1,0,1)$\\
$(1,4)$ & $(0,2,0,2,2,0)$\\
$(1,5)$ & $(0,2,2,0,0,2)$\\
$(2,2)$ & $(0,2,0,1,1,2)$\\
$(2,3)$ & $(0,2,2,2,0,0)$\\
$(2,4)$ & $(0,0,2,0,2,2)$\\
$(2,5)$ & $(0,0,1,1,0,1)$\\
$(3,3)$ & $(0,2,1,0,2,1)$\\
$(3,4)$ & $(0,0,1,1,1,0)$\\
$(3,5)$ & $(0,0,0,2,2,2)$\\
$(4,4)$ & $(0,1,1,2,0,2)$\\
$(4,5)$ & $(0,1,0,0,1,1)$\\
$(5,5)$ & $(0,1,2,1,2,0)$\\
\end{longtable}


\begin{thebibliography}{99}
\small

\bibitem{Euler1782}
L. Euler,
\newblock \emph{Recherches sur une nouvelle esp\`ece de quarr\'es magiques},
\newblock Verhandelingen uitgegeven door het Zeeuwsch Genootschap der
Wetenschappen te Vlissingen \textbf{9}, 85--239, 1782.

\bibitem{DenesKeedwell1974}
J. D\'enes and A. D. Keedwell,
\newblock \emph{Latin Squares and Their Applications},
\newblock Akad\'emiai Kiad\'o, Budapest, 1974.

\bibitem{ColbournDinitz2007}
C. J. Colbourn and J. H. Dinitz, editors,
\newblock \emph{Handbook of Combinatorial Designs}, second edition,
\newblock Chapman \& Hall/CRC, Boca Raton, 2007.

\bibitem{Werner2001}
R. F. Werner,
\newblock All teleportation and dense coding schemes,
\newblock \emph{Journal of Physics A: Mathematical and General}
\textbf{34}(35), 7081--7094, 2001.
\newblock doi:\href{https://doi.org/10.1088/0305-4470/34/35/332}{10.1088/0305-4470/34/35/332}.

\bibitem{MustoVicary2016}
B. Musto and J. Vicary,
\newblock Quantum Latin squares and unitary error bases,
\newblock \emph{Quantum Information and Computation}
\textbf{16}(15--16), 1318--1332, 2016.
\newblock arXiv:1504.02715.

\bibitem{GoyenecheEtAl2018}
D. Goyeneche, Z. Raissi, S. Di Martino, and K. {\.{Z}}yczkowski,
\newblock Entanglement and quantum combinatorial designs,
\newblock \emph{Physical Review A} \textbf{97}, 062326, 2018.
\newblock doi:\href{https://doi.org/10.1103/PhysRevA.97.062326}{10.1103/PhysRevA.97.062326}.

\bibitem{MustoVicary2019}
B. Musto and J. Vicary,
\newblock Orthogonality for quantum Latin isometry squares,
\newblock \emph{Electronic Proceedings in Theoretical Computer Science}
\textbf{287}, 253--266, 2019.
\newblock doi:\href{https://doi.org/10.4204/EPTCS.287.15}{10.4204/EPTCS.287.15}.

\bibitem{RatherEtAl2022}
S. A. Rather, A. Burchardt, W. Bruzda, G. Rajchel-Mieldzio\'c,
A. Lakshminarayan, and K. {\.{Z}}yczkowski,
\newblock Thirty-six entangled officers of Euler: Quantum solution to a
classically impossible problem,
\newblock \emph{Physical Review Letters} \textbf{128}, 080507, 2022.
\newblock doi:\href{https://doi.org/10.1103/PhysRevLett.128.080507}{10.1103/PhysRevLett.128.080507}.

\bibitem{ZyczkowskiEtAl2023}
K. {\.{Z}}yczkowski, W. Bruzda, G. Rajchel-Mieldzio\'c, A. Burchardt,
S. A. Rather, and A. Lakshminarayan,
\newblock $9\times4=6\times6$: Understanding the quantum solution to Euler's
problem of 36 officers,
\newblock \emph{Journal of Physics: Conference Series} \textbf{2448},
012003, 2023.
\newblock doi:\href{https://doi.org/10.1088/1742-6596/2448/1/012003}{10.1088/1742-6596/2448/1/012003}.

\bibitem{PaczosEtAl2021}
J. Paczos, M. Wierzbi\'nski, G. Rajchel-Mieldzio\'c, A. Burchardt, and
K. {\.{Z}}yczkowski,
\newblock Genuinely quantum solutions of the game Sudoku and their cardinality,
\newblock \emph{Physical Review A} \textbf{104}, 042423, 2021.
\newblock doi:\href{https://doi.org/10.1103/PhysRevA.104.042423}{10.1103/PhysRevA.104.042423}.

\bibitem{TadejZyczkowski2006}
W. Tadej and K. {\.{Z}}yczkowski,
\newblock A concise guide to complex Hadamard matrices,
\newblock \emph{Open Systems \& Information Dynamics} \textbf{13}(2),
133--177, 2006.
\newblock doi:\href{https://doi.org/10.1007/s11080-006-8220-2}{10.1007/s11080-006-8220-2}.

\bibitem{Szollosi2012}
F. Sz\"oll\H{o}si,
\newblock Complex Hadamard matrices of order 6: a four-parameter family,
\newblock \emph{Journal of the London Mathematical Society} \textbf{85}(3),
616--632, 2012.
\newblock doi:\href{https://doi.org/10.1112/jlms/jdr052}{10.1112/jlms/jdr052}; arXiv:1008.0632.

\bibitem{ZhangWangJi2025}
Y. Zhang, X. Wang, and L. Ji,
\newblock Quantum Latin squares with all possible cardinalities,
\newblock \emph{Journal of Combinatorial Designs}, 2026.
\newblock doi:\href{https://doi.org/10.1002/jcd.70021}{10.1002/jcd.70021}; arXiv:2507.05642.

\bibitem{ZangEtAl2025}
Y. Zang, M. Zheng, Z. Tian, and X. Shan,
\newblock On the cardinalities of quantum Latin squares,
\newblock arXiv:2508.01972, 2025.

\bibitem{ZhangCao2026}
Y. Zhang and H. Cao,
\newblock Quantum Latin squares with maximal cardinality,
\newblock \emph{Discrete Mathematics} \textbf{349}, 114863, 2026.
\newblock doi:\href{https://doi.org/10.1016/j.disc.2025.114863}{10.1016/j.disc.2025.114863}.

\bibitem{ZhangJi2026}
Y. Zhang and L. Ji,
\newblock Quantum Latin squares of order $6m$ with all possible cardinalities,
\newblock arXiv:2601.09132, 2026.

\bibitem{Xu2026}
Z. Xu,
\newblock Three quantum Latin squares of order 6 with cardinalities 13, 15,
and 17,
\newblock arXiv:2605.15540, 2026.

\bibitem{ZhangLvCao2026}
Y. Zhang, M. Lv, and H. Cao,
\newblock On the possible cardinalities of quantum Latin squares,
\newblock arXiv:2607.19969v1 [math.CO], 2026.

\end{thebibliography}
\end{document}